\definecolor{MyGreen}{RGB}{29,162,55} 
\newtheorem{theorem}{Theorem}[section]
\newtheorem{prop}[theorem]{\rm \textsc{Proposition}}
\newtheorem{lem}[theorem]{\rm \textsc{Lemma}}
\newtheorem{coro}[theorem]{\rm \textsc{Corollary}}
\newtheorem{rem}[theorem]{\it Remark}
\newtheorem{conj}[theorem]{\rm \textsc{Conjecture}}
\newtheorem{thm}[theorem]{\it \textsc{Theorem}}
\newcommand{\N}{\mathbb{N}} 
\newcommand{\F}{\mathbb{F}} 
\newcommand{\KK}{\mathbb{K}} 
\newcommand{\ra}{\longrightarrow} 
\newcommand{\rank}{{\rm rank}} 
\newcommand{\GL}{{\rm GL}} 
\newcommand{\SL}{{\rm SL}} 
\newcommand{\U}{{\rm U}} 
\newcommand{\Tr}{{\rm Tr}} 
\newcommand{\Qt}{{\rm Quot}} 
\newcommand{\HH}{\mathcal{H}}
\newcommand{\Frob}{{\rm Frob}_p}
\newcommand{\Sb}[2]{{\overline{S}^{#1}_{#2}}}
\newcommand{\gam}{\gammaup} 
\newcommand{\lam}{\lambdaup} 
\newcommand{\sig}{\sigmaup}
\newcommand{\binomial}[2]{\genfrac{(}{)}{0pt}{}{#1}{#2}}
\begin{document}
\setlength{\oddsidemargin}{0cm}
\setlength{\evensidemargin}{0cm}

\title{\scshape Modular invariants of a vector and a covector: a proof of a conjecture of Bonnaf\'e and Kemper}

\author{Yin Chen}

\address{School of Mathematics and Statistics, Northeast Normal University,
 Changchun 130024, P.R. China}

\email{ychen@nenu.edu.cn}

\author{David L. Wehlau}

\address{Department of Mathematics and Computer Science, Royal Military College, Kingston, ON, K7K 5L0, Canada}

\email{wehlau@rmc.ca}

\date{\today}

\def\shorttitle{Modular invariants of a vector and a covector}

\begin{abstract}
Consider a finite dimensional vector space $V$ over a finite field $\F_q$.
We give a minimal generating set for the ring of invariants $\F_q[V \oplus V^*]^{\GL(V)}$,
and show that this ring is a Gorenstein ring but is not a complete intersection.
These results confirm a conjecture of Bonnaf\'e and Kemper \cite[Conjecture 3.1]{BK2011}.
\end{abstract}

\subjclass[2010]{13A50.}

\keywords{Modular invariants; general linear groups; finite fields.}

\maketitle
\baselineskip=18pt


\setcounter{subsection}{0}
\renewcommand{\thesubsection}
{\arabic{subsection}}

\setcounter{equation}{0}
\renewcommand{\theequation}
{\arabic{equation}}
\setcounter{theorem}{0}
\renewcommand{\thetheorem}
{\arabic{theorem}}

\subsection{Introduction}

Suppose a group $G$ acts linearly on a $k$-vector space $W$.  This induces an action of $G$ on the dual space $W^{*}$
  by $\sig\cdot l = l\circ \sig^{-1}$ for $\sig \in G$ and $l \in W^*$.  Extending the action on $W^*$
  multiplicatively yields an action of $G$ on the polynomial ring $k[W]$.
  Here $k[W]$ denotes the symmetric algebra on $W^*$.  If $\{x_1,x_2\dots,x_n\}$ is a basis of $W^*$, we may
  write $k[W]=k[x_1,x_2,\dots,x_n]$.  The ring of invariants is the subalgebra
  $k[W]^G := \{f \in k[W] \mid \sig\cdot f = f, \text{ for all } \sig \in G\}$.

In this paper we prove the following result, which was conjectured by Bonnaf\'e and Kemper \cite[Conjecture 3.1]{BK2011}.

\begin{thm}\label{mt}
Let $\F_q$ denote the finite field of characteristic $p$ and having $q$ elements.
Let $n\geq 2$ and let $V$ be an $n$ dimensional vector space over $\F_q$.  Then
$$
\F_q[V\oplus V^*]^{\GL(V)}=\F_q\Big[c_{n,0},c_{n,1},\dots,c_{n,n-1},c_{n,0}^*,c_{n,1}^{*},\dots,c_{n,n-1}^*,u_{1-n},\dots,u_{-1},u_{0},u_{1},\dots,u_{n-1}\Big].$$
Moreover, $\F_q[V\oplus V^*]^{\GL(V)}$ is Gorenstein but is not a complete intersection.
\end{thm}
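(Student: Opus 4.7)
The plan is to split the theorem into three statements: (i) the ring $\F_q[V \oplus V^*]^{\GL(V)}$ is generated by the $4n-1$ listed elements; (ii) it is Gorenstein; (iii) it is not a complete intersection. Before attempting any of these, I would first pin down the definitions: the $c_{n,i}$ and $c_{n,i}^*$ must be the classical Dickson invariants of $\F_q[V]^{\GL(V)}$ and $\F_q[V^*]^{\GL(V)}$ respectively, while the $u_j$ for $1-n\le j\le n-1$ are $2n-1$ mixed invariants built from Frobenius-twisted pairings of $V$ with $V^*$, typically of the form $\sum_i x_i^{q^{|j|}} y_i$ or a symmetric variant whose $\GL(V)$-invariance is a direct check.

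To prove generation I would compare bigraded Hilbert series. The Hilbert series $H(s,t)$ of $\F_q[V\oplus V^*]^{\GL(V)}$ is accessible either from Bonnaf\'e--Kemper's analysis or from a Molien-type calculation for $\GL_n(\F_q)$. Let $R$ denote the subalgebra generated by the candidate $4n-1$ elements. I would first verify that the $2n$ Dickson invariants form a homogeneous system of parameters in $R$ (they are algebraically independent and their common zero locus in $V\oplus V^*$ is just the origin), so that $R$ is a finitely generated module over the polynomial subring $A := \F_q[c_{n,0},\ldots,c_{n,n-1},c_{n,0}^*,\ldots,c_{n,n-1}^*]$, spanned by monomials in the $u_j$. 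Computing $H(R;s,t)$ then reduces to enumerating a monomial basis of $R/(c_{n,0},\ldots,c_{n,n-1},c_{n,0}^*,\ldots,c_{n,n-1}^*)$, and matching the resulting series to that of the full invariant ring forces equality of the two algebras. The main obstacle is precisely this enumeration: the $u_j$ satisfy non-obvious characteristic-$p$ identities modulo the Dickson ideal (for instance, high powers like $u_j^q$ become expressible in terms of $u_{j+1}$ and lower, together with Dickson correction terms), and organising these reductions into a clean monomial basis is the combinatorial heart of the argument.

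Once generation is established, Cohen--Macaulayness follows from $R$ being a finitely generated free $A$-module, since $A$ is a polynomial ring of the correct dimension $2n$. The Gorenstein property then reduces to checking that $H(s,t)$ satisfies a palindromic functional equation of the form $H(s^{-1},t^{-1}) = \pm s^{a} t^{b} H(s,t)$ for suitable shifts $a,b$; this is a straightforward verification from the closed form of $H$ and is equivalent to the socle of $R/(c_{n,i},c_{n,i}^*)$ being one-dimensional.

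For the non-complete-intersection assertion I would argue by generator-relation counting. With Krull dimension $2n$ and a minimal generating set of size $4n-1$, a CI presentation would force the defining ideal to have exactly $(4n-1)-2n = 2n-1$ generators. I would then exhibit at least $2n$ independent minimal relations---multiplicative identities of the form $u_i u_j \equiv u_k u_\ell \pmod{(c_{n,*},c_{n,*}^*)}$ arising from the bilinearity and Frobenius-symmetry of the pairing, together with the unavoidable Dickson-type syzygies---and show these survive to the minimal free resolution. Equivalently, one may read the embedding codimension off $H(s,t)$ directly and verify it exceeds $2n-1$. The expected obstacle here is modest compared to (i): it is a matter of exhibiting enough explicit syzygies and confirming their minimality, once the combinatorics of the $u_j$ modulo the Dickson ideal has been sorted out in the generation step.
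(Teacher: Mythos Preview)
Your proposal has a genuine gap in part (i) that undermines the whole strategy. The Hilbert-series comparison you outline requires knowing $H\big(\F_q[V\oplus V^*]^{\GL(V)};s,t\big)$ \emph{independently} of the candidate ring $R$. But this is a modular situation: $p$ divides $|\GL_n(\F_q)|$, so Molien's formula is unavailable, and Bonnaf\'e--Kemper do not supply this series either. There is no known closed formula for the bigraded Hilbert series of these modular invariants; obtaining one would already be essentially equivalent to the theorem. The same circularity infects your computation of $H(R;s,t)$: enumerating a monomial basis of $R$ modulo the Dickson ideal presupposes full control of the relations among the $u_j$, which is exactly what you are trying to establish. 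A related issue: you assert that $R$ is free over the Dickson subring $A$, but in positive characteristic this is not automatic and must itself be proved.

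The paper's route to generation is entirely different. It exploits that $U$ is a $p$-Sylow subgroup of $G$, so the Reynolds operator $R_U^G$ surjects $\F[V\oplus V^*]^U$ onto $\F[V\oplus V^*]^G$, and Bonnaf\'e--Kemper plus Campbell--Hughes give explicit $A$-module generators $\omega$ of the $U$-invariants. The problem reduces to showing each $R_U^G(\omega)$ lies in $A$. This is done by introducing auxiliary rings $B_k=\F[c_{n,i},u_{n-1},\dots,u_{1-n-k}]$ and their union $B_\infty$, proving each $B_k$ is a complete intersection and a UFD (via Nagata's lemma, showing $c_{n,0}$ is prime), and then showing that $h=R_U^G(\omega)$ is \emph{pseudo-almost integral} over $B_\infty$: a fixed power of $c_{n,0}$ clears $h^{q^k}$ into $B_k$ for every $k$. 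Since a UFD is completely integrally closed, this forces $h\in B_\infty\subseteq A$.

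For Gorenstein, the paper does not use palindromy of a (non-existent) explicit Hilbert series; instead it observes that $G$ acts on $V\oplus V^*$ inside $\SL$ without pseudo-reflections, that $\F[V\oplus V^*]^U$ is Cohen--Macaulay (complete intersection), hence so is $\F[V\oplus V^*]^G$ by Campbell--Hughes--Pollack, and then applies Braun's criterion. For non-CI, rather than exhibiting a $2n$-th minimal relation, the paper assumes the $2n-1$ visible relations generate the kernel, writes down the \emph{hypothetical} Hilbert series this would force, and extracts from Benson's Laurent expansion at $\lambda=1$ a formula for $|G|$ that is visibly coprime to $q$ --- a contradiction.
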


Here $c_{n,0}, c_{n,1},\dots,c_{n,n-1}$ are the Dickson invariants and $\F_q[V]^{\GL(V)} = \F_q[c_{n,0}, c_{n,1},\dots,c_{n,n-1}]$
which was proved by Dickson \cite[Theorem]{Dic1911}.   See Wilkerson \cite[Theorem 1.2]{Wil1983} for a modern treatment.
Similarly, $c^*_{n,0}, c^*_{n,1},\dots,c^*_{n,n-1}$  are the Dickson invariants in the dual set of variables so that
$\F_q[V^*]^{\GL(V)} = \F_q[c^*_{n,0}, c^*_{n,1},\dots,c^{*}_{n,n-1}]$.  The invariants $u_j$ are defined in Section~\ref{BKth} below.

\begin{rem}{\rm
(1) The analogue of Theorem \ref{mt} for $n=1$ is treated in \cite{BK2011} since $\GL(V)$ is its own its Borel subgroup, $\F_q^*$.
When $n=1$ we have  $\F_q[V\oplus V^*]^{\GL(V)} = \F_q[x_1,y_1]^{\F_q^*} = \F_q[x_{1}^{q-1},x_{1}y_{1},y_{1}^{q-1}]$ is a hypersurface ring.
(2) The special case $n=2$ of Theorem \ref{mt} was proved in Chen \cite{Che2014}.
}
\end{rem}

\subsection{Dickson invariants and Mui invariants}

For simplicity, in the rest of this paper,  we write $\F=\F_{q}$, $G=\GL(V)$, and $U=\U(V)$ for the unipotent group of upper triangular matrices with 1's on the diagonal. 
 We let $\{y_1,y_2,\dots,y_n\}$ denote the standard basis of $V$ and $\{x_1,x_2,\dots,x_n\}$ the dual basis of $V^*$.
 Then $\F[V]=\F[x_1,x_2,\dots,x_n]$, $\F[V^{*}]=\F[y_1,y_2,\dots,y_n]$, and
 $\F[V \oplus V^*] = \F[x_1,x_2,\dots,x_n,y_1,y_2,\dots,y_n]$. 
 
We give two definitions of the Dickson invariants $c_{n,0},c_{n,1},\dots,c_{n,n-1}$. The first definition illustrates 
the importance of Dickson invariants. 
 \begin{equation*}
\prod_{x \in V^*} (\lam-x) = \sum_{i=0}^{n} (-1)^{n-i} c_{n,i} \cdot\lam^{q^i}
\end{equation*}
where $\lam$ is an indeterminate and we make the convention that $c_{n,n}:=1$.

The second definition expresses the Dickson invariants in terms of certain determinants and will
allow us later to derive some relations.
 Consider the following $n\times (n+1)$-matrix whose entires taken from $\F[V]$:
$$D=\begin{pmatrix}
x_{1}&  x_{1}^{q}&\cdots&x_{1}^{q^{n}}  \\
x_{2}&  x_{2}^{q}&\cdots&x_{2}^{q^{n}} \\
\vdots&\vdots&\cdots&\vdots\\
x_{n}&  x_{n}^{q}&\cdots&x_{n}^{q^{n}}
\end{pmatrix}.
$$
For $0\leqslant i\leqslant n$, we let $d_{n,i}$ denote the determinant of the matrix obtained by deleting the $(i+1)$-th column from $D$.
Then
\begin{equation*}
c_{n,0}=\frac{d_{n,0}}{d_{n,n}}=d_{n,n}^{q-1}, \quad c_{n,1}=\frac{d_{n,1}}{d_{n,n}},\quad\dots,\quad c_{n,n-1}=\frac{d_{n,n-1}}{d_{n,n}}.
\end{equation*}

In 1975,  Mui \cite{Mui1975} proved that the ring of invariants $\F[V]^{U}$ of the unipotent group 
 is a polynomial algebra over $\F$, generated by $\{f_{1},f_{2},\dots,f_{n}\}$,
where $f_{1}=x_{1}$, and for $2\leqslant i\leqslant n$,
\begin{equation*}
f_{i}:=\prod_{v\in V_{i-1}}(x_{i}+v)
\end{equation*}
 where $V_{i-1}$ denotes the vector subspace generated by $\{x_{1},x_{2},\dots,x_{i-1}\}$.

\subsection{Bonnaf\'e-Kemper's Theorem}\label{BKth}

In 2011, Bonnaf\'e-Kemper \cite{BK2011} considered the actions of $U$ and $G$ on 
 $\F[V\oplus V^{*}]$, and the corresponding two rings of invariants. 

The ring $\F[V\oplus V^{*}]$ is equipped with an involution 
$$*:\F[V\oplus V^{*}]\ra\F[V\oplus V^{*}], ~~f\mapsto f^{*}$$  given by 
$x_{1}\mapsto y_{n}, x_{2}\mapsto y_{n-1},\dots,x_{n-1}\mapsto y_{2},x_{n}\mapsto y_{1}.$
Clearly 
\begin{eqnarray*}
\F[V^{*}]^{G}&=&\F[c_{n,0}^{*},c_{n,1}^{*},\dots,c_{n,n-1}^{*}] \\
\F[V^{*}]^{U}&=&\F[f_{1}^{*},f_{2}^{*},\dots,f_{n}^{*}].
\end{eqnarray*}

The map $F:\F[V\oplus V^{*}]\ra\F[V\oplus V^{*}]$ is defined by $x_{i}\mapsto x_{i}^{q}, y_{i}\mapsto y_{i}$. Dually,  another one 
$F^{*}:\F[V\oplus V^{*}]\ra\F[V\oplus V^{*}]$ is defined by $x_{i}\mapsto x_{i}, y_{i}\mapsto y_{i}^{q}$.  These two maps commute with
the action of $G$ and so restrict to endomorphisms of $\F[V\oplus V^{*}]^U$ and $\F[V\oplus V^{*}]^G$. 

The natural pairing of $V$ with $V^*$ corresponds to a natural quadratic $G$-invariant in $\F[V\oplus V^{*}]$:
\begin{equation*}
u_{0}:=x_{1}y_{1}+x_{2}y_{2}+\cdots+x_{n}y_{n}.
\end{equation*}
For $i\in\N^{+}$, we define 
\begin{eqnarray*}
u_{i} & := & F^{i}(u_{0}) = x_{1}^{q^{i}}y_{1}+x_{2}^{q^{i}}y_{2}+\cdots+x_{n}^{q^{i}}y_{n}\\
u_{-i} & := & (F^{*})^{i}(u_{0})=x_{1}y_{1}^{q^{i}}+x_{2}y_{2}^{q^{i}}+\cdots+x_{n}y_{n}^{q^{i}}
\end{eqnarray*}
which are $G$-invariants, since $F$ and $F^{*}$ commute with the action of $\GL(V)$. We observe that 
$u_{-i}^{*}=u_{i}$ for all $i\in\N$.

Bonnaf\'e-Kemper \cite[Theorem 2.4]{BK2011} proved that 
\begin{equation}
\label{BKt}
\F[V\oplus V^{*}]^{U}=\F[f_{1},\dots,f_{n},f_{1}^{*},\dots,f_{n}^{*},u_{2-n},\dots,u_{0},\dots,u_{n-2}].
\end{equation}
Furthermore they proved that this ring is a complete intersection and exhibited the minimial relations
among the generating invariants.

\subsection{The Reynolds operator and Campbell-Hughes' Theorem}

  Suppose that a group $G_{1}$ acts linearly on a $k$-vector space $V$.  Let $G_{2}$ be any subgroup of  $G_{1}$.  
  The {\it relative trace} (or {\it relative transfer})
  map is defined by
  $$\Tr_{G_{2}}^{G_{1}} : k[V]^{G_{2}} \ra k[V]^{G_{1}}, \quad g \mapsto \sum_{\sig \in G_{1}/G_{2}} \sig \cdot g$$
  where $G_{1}/G_{2}$ denotes any set of left coset representatives of $G_{2}$ in $G_{1}$.
  It is easy to see that $\Tr_{G_{2}}^{G_{1}}$ is a degree-preserving $k[V]^{G_{1}}$-module homomorphism.
   If in addition, the index $[G_{1}:G_{2}]$ is invertible in $k$, then we have the so-called \textit{Reynolds operator}
   \begin{equation*}
R_{G_{2}}^{G_{1}} = \frac{1}{[G_{1}:G_{2}]} \Tr_{G_{2}}^{G_{1}}
\end{equation*}
 which is a projection from $k[V]^{G_{2}}$ onto $k[V]^{G_{1}}$.

To prove Theorem \ref{mt}, we first note that the group $U$ is a $p$-Sylow subgroup of $G$, so the index $[G:U]$ is invertible in $\F$.
Then the Reynolds operator $R_{U}^{G}: \F[V\oplus V^{*}]^{U} \ra \F[V\oplus V^{*}]^{G}$ is a surjective $\F[V\oplus V^{*}]^{G}$-module homomorphism, which together with a generating set of $\F[V\oplus V^{*}]^{U}$ we have known in Bonnaf\'e-Kemper's result (\ref{BKt}), leads us to use  the Reynolds operator $R_{U}^{G}$ to find a generating set  of $\F[V\oplus V^{*}]^{G}$.

In  \cite{CH1996} Campbell and Hughes studied connections between the Dickson invariants and the Mui invariants.  They constructed
 a set of left coset representatives for $U_{n}(\F_{p})\subset \GL_{n}(\F_{p})$, over the prime field $\F_{p}$.  
They also found a free basis for the Mui invariants $\F_{p}[V]^{U_{n}(\F_{p})}$ as a module over the Dickson invariants 
$\F_{p}[V]^{\GL_{n}(\F_{p})}$.
Although they stated their results working over $\F_p$, their proofs are valid over any finite field. 
In particular, the proof of Campbell-Hughes \cite[Theorem 6.3]{CH1996} shows that 
\begin{equation}
\label{CHt}
\F[V]^{U}=\bigoplus_{\gam\in\Gamma}\F[V]^{G}\cdot \gam
\end{equation}
is a free $\F[V]^{G}$-module, where
\begin{equation*}
\Gamma:=\Big\{f_{1}^{a_{1}}\cdot f_{2}^{a_{2}}\cdots f_{n}^{a_{n}}~\big|~ 0\leqslant a_{1}<q^{n}-1, 0\leqslant a_{2}<q^{n-1}-1,\dots,0\leqslant a_{n}<q-1\Big\}.
\end{equation*}
Of course the analagous result holds for $\F[V^{*}]^{U}$ and $\F[V^{*}]^{G}$. 

\subsection{Sketch of the proof of Theorem \ref{mt}}

We define 
\begin{eqnarray*}
A &:= &\F[ c_{n,0},c_{n,1},\dots,c_{n,n-1},c_{n,0}^{*},c_{n,1}^{*},\dots,c_{n,n-1}^{*},u_{1-n},u_{2-n},\dots,u_{0},\dots,u_{n-2},u_{n-1}]\\
\Omega&:=&\Big\{
f_{1}^{a_{1}}f_2^{a_2}\cdots f_{n}^{a_{n}}\cdot f_{1}^{*b_{1}}f_2^{*b_2}\cdots f_{n}^{*b_n}~\big|~
   0 \leq a_i, b_i \leq q^{n+1-i}-2 \text{ for } i=1,2,\dots,n
 \Big\}.
\end{eqnarray*}

In this section we outline the steps we will follow in order to prove
\begin{equation}\label{first assertion}
 \F[V\oplus V^{*}]^{G}=A
 \end{equation}
  which is the first assertion of Theorem~\ref{mt}.

Since $A \supset \F[V]^G$, combining Bonnaf\'e-Kemper's result (\ref{BKt}), with Campbell-Hughes' result (\ref{CHt}),
we see that $\Omega$ is a generating set for $\F[W]^{U}$ as an $A$-module, i.e.,
\begin{equation}
\label{Amodule}
\F[V\oplus V^{*}]^{U}=\sum_{\omega\in\Omega}A\cdot \omega.
\end{equation}

Since $R_U^G( \F[V \oplus V^*]^U) = \F[V \oplus V^*]^G$ and since $A \subseteq \F[V \oplus V^*]^G$, we see that 
to show (\ref{first assertion}) it suffices to show that  
\begin{equation} \label{goal}
R_{U}^{G}(\omega)\in A \quad\text{ for all }\omega\in\Omega.
\end{equation}

Define 
\begin{eqnarray*}
h&:=&R_{U}^{G}(\omega) \text{ where }\omega\in\Omega\\
B_{k}&:=&\F[c_{n,0},c_{n,1},\dots,c_{n,n-1}, u_{n-1},u_{n-2},\dots,u_{0},u_{-1},\dots,u_{1-n-k}],\quad k=0,1,2,\dots\\
B_{\infty}&:=&\cup_{k=0}^{\infty} B_{k}.
\end{eqnarray*}

Our proof will be separated into the following steps:

\begin{enumerate}
  \item Show that the invariant field $\F(V\oplus V^{*})^{G}=\F(c_{n,0},c_{n,1},\dots,c_{n,n-1},u_{0},\dots,u_{n-1})$.
  \item Show $B_{\infty}\subseteq A$ and $\Qt(B_{\infty})=\Qt(A)=\F(V\oplus V^{*})^{G}$.
  \item Show $\F[V\oplus V^{*}]^{G}[c_{n,0}^{-1}]=\F[c_{n,0},c_{n,1},\dots,c_{n,n-1},u_{0},\dots,u_{n-1}][c_{n,0}^{-1}]$.
  \item Show $B_{\infty}$ is a unique factorization domain, and thus it is a completely integrally closed.
  \item Show that $h$ is a quasi-almost integral element over $B_{\infty}$ which implies that $h\in B_{\infty}\subseteq A$.
\end{enumerate}

\subsection{Some relations in $\F[V\oplus V^{*}]^{G}$}

As we noted above  $c_{n,n}=1=c_{n,n}^{*}$.

\begin{lem}\label{ini}
In $\F[V\oplus V^{*}]^{G}$, we have the following relation:
\begin{equation}
\tag{$T_0$}\label{T0}
c_{n,0}u_{0}-c_{n,1}u_{1}+c_{n,2}u_{2}-\cdots+(-1)^{n-1}c_{n,n-1}u_{n-1}+(-1)^{n}c_{n,n}u_{n}=0.
\end{equation}
\end{lem}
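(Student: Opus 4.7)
The plan is to deduce the relation $(T_0)$ directly from the first (product) definition of the Dickson invariants combined with the defining expressions of the invariants $u_i$.

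The starting point is the identity
\begin{equation*}
\prod_{x\in V^{*}}(\lam-x)=\sum_{i=0}^{n}(-1)^{n-i}c_{n,i}\,\lam^{q^{i}},
\end{equation*}
which is a polynomial identity in the indeterminate $\lam$ with coefficients in $\F[V]$. The first step is to specialize $\lam$ to an element of $V^{*}$. Since each basis vector $x_{j}$ lies in $V^{*}$, the product on the left contains the factor $(x_{j}-x_{j})$ and hence vanishes when we set $\lam=x_{j}$. This yields, for every $j=1,2,\dots,n$, the $n$ identities
\begin{equation*}
\sum_{i=0}^{n}(-1)^{n-i}c_{n,i}\,x_{j}^{q^{i}}=0\qquad\text{in }\F[V].
\end{equation*}

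The second step is to combine these $n$ scalar relations into a single relation on $\F[V\oplus V^{*}]$ by multiplying the $j$-th identity by $y_{j}$ and summing over $j$. Exchanging the order of summation gives
\begin{equation*}
\sum_{i=0}^{n}(-1)^{n-i}c_{n,i}\!\left(\sum_{j=1}^{n}x_{j}^{q^{i}}y_{j}\right)=0.
\end{equation*}
By the very definition of $u_{i}$ (for $i\geqslant 0$), the inner sum is precisely $u_{i}$, so we obtain $\sum_{i=0}^{n}(-1)^{n-i}c_{n,i}u_{i}=0$. Multiplying through by $(-1)^{n}$ transforms the sign $(-1)^{n-i}$ into $(-1)^{i}$, producing exactly the asserted relation $(T_0)$.

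There is no real obstacle here; the argument is entirely formal once one observes that evaluating the Dickson product identity at $\lam=x_{j}$ makes it vanish. The only thing to watch is bookkeeping of signs when multiplying by $(-1)^{n}$, and the fact that the coefficient on $u_{n}$ is $(-1)^{n}c_{n,n}=(-1)^{n}$, consistent with the convention $c_{n,n}=1$ already recorded in the paper.
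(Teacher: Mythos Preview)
Your proof is correct and takes a genuinely different route from the paper's argument. The paper derives $(T_0)$ by combining two relations $(R_n)$ and $(R_n^+)=F(R_n)$ from Bonnaf\'e--Kemper (which involve the Mui invariant $f_n$ and the lower-dimensional Dickson invariants $c_{n-1,i}$), and then eliminating $f_n\cdot f_1^*$ via the linear combination $f_n^{q-1}\cdot(R_n)-(R_n^+)$, finally invoking the recursion $c_{n,i}=c_{n-1,i-1}^q+f_n^{q-1}c_{n-1,i}$ to reassemble the coefficients. Your argument instead goes straight to the source: the defining product identity for the Dickson invariants already encodes, upon specializing $\lam=x_j$, the vanishing of $\sum_i(-1)^{n-i}c_{n,i}x_j^{q^i}$, and contracting with $y_j$ immediately yields $(T_0)$. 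Your approach is shorter, fully self-contained, and avoids any appeal to the $(R_k)$ relations or to the Mui invariants; the paper's approach, while more circuitous here, has the minor advantage of keeping $(T_0)$ within the same family of manipulations used elsewhere (the $(R_k)$ machinery and the maps $F,F^*$), but for this particular lemma your direct argument is clearly preferable.
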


\begin{proof}
We recall the relations $(R_n)$ and $(R_n^+) := F(R_n)$ from Bonnaf\'e-Kemper \cite[page 105]{BK2011}:
    \begin{align}
      \tag{$R_n$}\label{Rn}  \sum_{i=0}^{n-1}(-1)^{i+n+1} c_{n-1,i}\cdot u_i - f_n \cdot f_1^{*}&=0 \\
      \tag{$R_n^+$}  \sum_{i=0}^{n-1} (-1)^{i+n+1} c_{n-1,i}^q\cdot u_{i+1} - f_n^q \cdot f_1^*&=0. \label{Rn+}
    \end{align}
 We will use the equation $c_{n,i} = c_{n-1,i-1}^q + f_n^{q-1} \cdot c_{n-1,i}$ (see Bonnaf\'e-Kemper \cite[page 104]{BK2011} or Wilkerson \cite[Proposition 1.3 (b)]{Wil1983})
     which is valid for $0 \leq i \leq n$.
     Consider
    $f_n^{q-1} \cdot$ (\ref{Rn}) - (\ref{Rn+}).  This yields the equation
    \begin{align*}
      0 &= \sum_{i=0}^{n-1} (-1)^{i+n+1} f_n^{q-1} \cdot c_{n-1,i}  \cdot u_i  - \sum_{i=1}^{n} (-1)^{i+n} c_{n-1,i-1}^q \cdot u_{i}\\
         &=  (-1)^{n+1}f_n^{q-1}c_{n-1,0}\cdot u_0 + \sum_{i=1}^{n-1} (-1)^{i+n+1}\left( f_n^{q-1}\cdot c_{n-1,i} \cdot u_i  + c_{n-1,i-1}^q \cdot u_{i}\right) - (-1)^{2n}c_{n-1,n-1}^q \cdot u_n\\
         & = \sum_{i=0}^n (-1)^{i+n+1}\left( f_n^{q-1} \cdot c_{n-1,i}  + c_{n-1,i-1}^q \right)u_i \\
         &= \sum_{i=0}^n (-1)^{i+n+1} c_{n,i} \cdot u_i=(-1)^{n+1}\left(\sum_{i=0}^n (-1)^{i} c_{n,i} \cdot u_i\right)
    \end{align*}
    which is the relation (\ref{T0}) as desired.
\end{proof}

Applying the map $F^{*}$ on (\ref{T0}) repeatedly, we obtain more relations:
\begin{align*}
 \tag{${T_{1}}$} \label{T1} c_{n,0}u_{-1}-c_{n,1}u_{0}^{q}+c_{n,2}u_{1}^{q}-\dots+(-1)^{n-1}c_{n,n-1}u_{n-2}^{q}+(-1)^{n}u_{n-1}^{q}    & =0  \\
\tag{${T_{2}}$} \label{T2}
c_{n,0}u_{-2}-c_{n,1}u_{-1}^{q}+c_{n,2}u_{0}^{q^{2}}-\dots+(-1)^{n-1}c_{n,n-1}u_{n-3}^{q^{2}}+(-1)^{n}u_{n-2}^{q^{2}} &=  0\\ 
&\vdots\\
\tag{${T_{n-1}}$} \label{Tn-1}
c_{n,0}u_{1-n}-c_{n,1}u_{2-n}^{q}+c_{n,2}u_{3-n}^{q^{2}}-\dots+(-1)^{n-1}c_{n,n-1}u_{0}^{q^{n-1}}+(-1)^{n}u_{1}^{q^{n-1}}&=0.
\end{align*}
Continuing we obtain
\begin{equation}
\tag{${T_j}$} \label{Tj} \sum_{i=0}^{n} (-1)^i c_{n,i} u_{i-j}^{q^{\min(i,j)}} = 0
\end{equation}
for all $j \geq 0$.

We apply the involution $*$ on (\ref{T0}), (\ref{T1}), $\dots$, (\ref{Tn-1}) respectively, and obtain 
\begin{align*}
\tag{${T_{0}^{*}}$}\label{T0s}
c_{n,0}^{*}u_{0}-c_{n,1}^{*}u_{-1}+c_{n,2}^{*}u_{-2}-\cdots+(-1)^{n-1}c_{n,n-1}^{*}u_{1-n}+(-1)^{n}u_{-n}&=0\\
 \tag{$T_{1}^{*}$} \label{T1s} c_{n,0}^{*}u_{1}-c_{n,1}^{*}u_{0}^{q}+c_{n,2}^{*}u_{-1}^{q}-\dots+(-1)^{n-1}c_{n,n-1}^{*}u_{2-n}^{q}+(-1)^{n}u_{1-n}^{q}    & =0  \\
\tag{${T_{2}^{*}}$} \label{T2s}
c_{n,0}^{*}u_{2}-c_{n,1}^{*}u_{1}^{q}+c_{n,2}^{*}u_{0}^{q^{2}}-\dots+(-1)^{n-1}c_{n,n-1}^{*}u_{3-n}^{q^{2}}+(-1)^{n}u_{2-n}^{q^{2}} &=  0\\
&\vdots\\
\tag{$T_{n-1}^{*}$} \label{Tn-1s}
c_{n,0}^{*}u_{n-1}-c_{n,1}^{*}u_{n-2}^{q}+c_{n,2}^{*}u_{n-3}^{q^{2}}-\dots+(-1)^{n-1}c_{n,n-1}^{*}u_{0}^{q^{n-1}}+(-1)^{n}u_{-1}^{q^{n-1}}&=0.
\end{align*}

Applying $*$ to (\ref{Tj}) yields 
\begin{equation}
\tag{${T^*_j}$} \label{Tjs} \sum_{i=0}^{n} (-1)^i c^*_{n,i} u_{j-i}^{q^{\min(i,j)}} = 0
\end{equation}
for all $j \geq 0$.

From the expression for the Dickson invariants in terms of determinants we have
$c_{n,0}\cdot c_{n,0}^{*}=(d_{n,n}\cdot d_{n,n}^{*})^{q-1}$, where
\begin{eqnarray*}
d_{n,n}\cdot d_{n,n}^{*}& = & \textrm{det}\begin{pmatrix}
     x_{1} &x_{2}&\cdots&x_{n}    \\
     x_{1}^{q} &x_{2}^{q}&\cdots&x_{n}^{q}   \\
     \vdots&\vdots&\vdots&\vdots\\
     x_{1}^{q^{n-1}} &x_{2}^{q^{n-1}}&\cdots&x_{n}^{q^{n-1}}   \\
\end{pmatrix}
\textrm{det}\begin{pmatrix}
     y_{1} &y_{1}^{q}&\cdots&y_{1}^{q^{n-1}}    \\
     y_{2} &y_{2}^{q}&\cdots&y_{2}^{q^{n-1}}    \\
     \vdots&\vdots&\vdots&\vdots\\
    y_{n} &y_{n}^{q}&\cdots&y_{n}^{q^{n-1}}    \\
\end{pmatrix}.
\end{eqnarray*}
Thus we have another relation
\begin{equation}
\tag{$T_{00}$}\label{T00}
c_{n,0}\cdot c_{n,0}^{*}-\textrm{det}\begin{pmatrix}
     u_{0} &   u_{-1}&u_{-2}&\cdots&u_{1-n} \\
      u_{1}&  u_{0}^{q}&u_{-1}^{q}&\cdots& u_{2-n}^{q}\\
      u_{2}&u_{1}^{q}&\ddots&\ddots&\vdots\\
      \vdots&\ddots&\ddots&\ddots&u_{-1}^{q^{n-2}}\\
      u_{n-1}&u_{n-2}^{q}&\cdots&u_{1}^{q^{n-2}}&u_{0}^{q^{n-1}}
\end{pmatrix}^{q-1}=0.
\end{equation}

\subsection{The invariant field $\F(V\oplus V^{*})^{G}$} 

In this subsection we study the rationality problem of the invariant field $\F(V\oplus V^{*})^{G}$.

\begin{lem} \label{algindep}
The set
$\{u_{n-1},\dots,u_{1},u_{0},u_{-1},\dots,u_{-n}\}$ is algebraically independent over $\F.$
\end{lem}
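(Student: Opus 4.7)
The natural approach is the Jacobian criterion: $2n$ polynomials in $\F[V\oplus V^*]$ are algebraically independent over the perfect field $\F$ as soon as the determinant of their Jacobian matrix with respect to $x_1,\dots,x_n,y_1,\dots,y_n$ is a nonzero polynomial. Two features of positive characteristic make this computation very clean here. First, $\partial x_k^{q^i}/\partial x_k=q^i\cdot x_k^{q^i-1}=0$ in $\F$ whenever $i\ge 1$, so each $u_i$ with $i\ge 1$ contributes zero in every $x$-column, and symmetrically each $u_{-i}$ with $i\ge 1$ contributes zero in every $y$-column. The remaining invariant $u_0$ has the self-referential partials $\partial u_0/\partial x_k=y_k$ and $\partial u_0/\partial y_k=x_k$.

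With the rows ordered as $u_{-n},u_{-(n-1)},\dots,u_{-1},u_0,u_1,\dots,u_{n-1}$ and the columns as $x_1,\dots,x_n,y_1,\dots,y_n$, the Jacobian therefore has the block shape
\[
J=\begin{pmatrix} Y & 0\\ \mathbf{y}^{T} & \mathbf{x}^{T}\\ 0 & X\end{pmatrix},
\]
where $Y$ is the $n\times n$ matrix with rows $(y_1^{q^i},\dots,y_n^{q^i})$ for $i=n,n-1,\dots,1$ and $X$ is the $(n-1)\times n$ matrix with rows $(x_1^{q^i},\dots,x_n^{q^i})$ for $i=1,\dots,n-1$. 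A Laplace expansion along the bottom $n-1$ rows is forced by the zero pattern: those rows vanish in the first $n$ columns, so the only contributing column-selections are obtained by omitting exactly one $y$-column, say $y_{j_0}$. The complementary $(n+1)\times(n+1)$ minor is block lower triangular, namely $\begin{pmatrix} Y & 0\\ \mathbf{y}^{T} & x_{j_0}\end{pmatrix}$, with determinant $x_{j_0}\cdot\det Y$.

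Summing over $j_0$ and tracking the Laplace signs, one obtains
\[
\det J\;=\;\pm\,\det Y\cdot\sum_{j_0=1}^{n}(-1)^{j_0}x_{j_0}\det X^{(j_0)}\;=\;\pm\,\det Y\cdot d_{n,n},
\]
since the rightmost sum is, up to a global sign, the cofactor expansion of $d_{n,n}$ along its first row. Finally, reversing the order of its rows identifies $\det Y$ with the $q$-th Frobenius image of the Moore determinant in the $y$-variables, so $\det Y=\pm(d_{n,n}^{*})^{q}$. Both $d_{n,n}$ and $d_{n,n}^{*}$ are nonzero elements of $\F[V\oplus V^*]$, whence $\det J\ne 0$ and the desired algebraic independence follows. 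I do not foresee a real obstacle; the sparse block shape of $J$ forced by characteristic $p$ does essentially all the work, and the only delicate part, and likeliest source of a slip, is the sign bookkeeping in the Laplace expansion.
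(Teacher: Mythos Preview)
Your approach is correct and essentially the same as the paper's: both apply the Jacobian criterion and identify the determinant as $\pm\, d_{n,n}\cdot (d_{n,n}^{*})^{q}$. The only difference is cosmetic. The paper orders the columns as $y_1,\dots,y_n,x_1,\dots,x_n$ and the rows as $u_{n-1},\dots,u_0,u_{-1},\dots,u_{-n}$, which makes the Jacobian visibly block lower-triangular with square diagonal blocks of size $n$ (the $u_0$ row sits as the last row of the top block), so the determinant factors immediately as $d_{n,n}\cdot d_{n,n}^{*q}$ with no Laplace expansion needed. In your ordering the same block-triangular structure is present if you group the $u_0$ row with the $u_1,\dots,u_{n-1}$ rows rather than isolating it; your Laplace expansion along the bottom $n-1$ rows recovers exactly this factorization, just with more bookkeeping.
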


\begin{proof}
This follows from the Jacobian criterion (see Benson \cite[Proposition 5.4.2]{Ben1993}), since
$$\textrm{det}\begin{pmatrix}
    \frac{\partial u_{n-1}}{\partial y_{1}} &  \cdots &     \frac{\partial u_{n-1}}{\partial y_{n}} &        \frac{\partial u_{n-1}}{\partial x_{1}} &  \cdots &     \frac{\partial u_{n-1}}{\partial x_{n}}    \\
      \vdots& \cdots &\vdots&\vdots&\cdots&\vdots\\
  \frac{\partial u_{0}}{\partial y_{1}} &  \cdots &     \frac{\partial u_{0}}{\partial y_{n}} &        \frac{\partial u_{0}}{\partial x_{1}} &  \cdots &     \frac{\partial u_{0}}{\partial x_{n}}    \\    
        \vdots& \cdots &\vdots&\vdots&\cdots&\vdots\\
  \frac{\partial u_{-n}}{\partial y_{1}} &  \cdots &     \frac{\partial u_{-n}}{\partial y_{n}} &        \frac{\partial u_{-n}}{\partial x_{1}} &  \cdots &     \frac{\partial u_{-n}}{\partial x_{n}} 
\end{pmatrix}=\textrm{det}\begin{pmatrix}
   x_{1}^{q^{n-1}}   & x_{2}^{q^{n-1}}   & \cdots& x_{n}^{q^{n-1}} & 0& 0&\cdots&0 \\
   x_{1}^{q^{n-2}}   & x_{2}^{q^{n-2}}   & \cdots& x_{n}^{q^{n-2}} & 0& 0&\cdots&0 \\
      \vdots & \vdots  & \cdots& \vdots & \vdots& \vdots&\cdots&\vdots \\
  x_{1}   & x_{2}  & \cdots& x_{n} & y_{1}   & y_{2}  & \cdots& y_{n} \\
  0& 0&\cdots&0 &y_{1}^{q}   & y_{2}^{q}   & \cdots& y_{n}^{q} \\
    0& 0&\cdots&0 &y_{1}^{q^{2}}   & y_{2}^{q^{2}}   & \cdots& y_{n}^{q^{2}} \\
          \vdots & \vdots  & \cdots& \vdots & \vdots& \vdots&\cdots&\vdots \\
  0& 0&\cdots&0 &y_{1}^{q^{n}}   & y_{2}^{q^{n}}   & \cdots& y_{n}^{q^{n}} \\
\end{pmatrix}$$
which is equal to  $d_{n,n}\cdot d_{n,n}^{*q}\neq 0$.
\end{proof}

\begin{prop}\label{invfield}
The invariant field 
\begin{eqnarray*}
\F(V\oplus V^{*})^{G}&=&\F(c_{n,0},u_{1-n},u_{2-n},\dots,u_{0},u_{1},\dots,u_{n-1})\\
&=&\F(c_{n,0}^{*},u_{1-n},u_{2-n},\dots,u_{0},u_{1},\dots,u_{n-1})\\
&=&\F(c_{n,0},c_{n,1},\dots,c_{n,n-1},u_{0},u_{1},\dots,u_{n-1})
\end{eqnarray*}
 is purely transcendental  over $\F$.
\end{prop}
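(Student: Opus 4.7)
The plan is to establish the three equalities in the order (C)$\to$(A)$\to$(B), where (C) denotes $\F(V\oplus V^*)^G=\F(c_{n,0},\ldots,c_{n,n-1},u_0,\ldots,u_{n-1})$ and (A), (B) are the first two displayed equalities. Pure transcendence then follows automatically: since $\F(V\oplus V^*)^G$ has transcendence degree $2n$ over $\F$ (because $|G|<\infty$) and each listed generating set has exactly $2n$ elements, those generators must be algebraically independent.

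For (C), I would use a field-degree calculation. The identity $u_j=\sum_i x_i^{q^j}y_i$ for $j=0,\ldots,n-1$ constitutes a linear system in $y_1,\ldots,y_n$ with nonsingular coefficient matrix $[x_i^{q^j}]$ (determinant $d_{n,n}$); hence $y_i\in\F(V)(u_0,\ldots,u_{n-1})$, so $\F(V\oplus V^*)=\F(V)(u_0,\ldots,u_{n-1})$, and in particular $u_0,\ldots,u_{n-1}$ are algebraically independent over $\F(V)$. By linear disjointness of the algebraic extension $\F(V)/\F(c_{n,0},\ldots,c_{n,n-1})$ (of degree $|G|$ by Dickson's theorem) from the purely transcendental extension $\F(c_{n,0},\ldots,c_{n,n-1})(u_0,\ldots,u_{n-1})/\F(c_{n,0},\ldots,c_{n,n-1})$, we get $[\F(V\oplus V^*):\F(c_{n,0},\ldots,c_{n,n-1},u_0,\ldots,u_{n-1})]=|G|$. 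Since this subfield lies between $\F(V\oplus V^*)$ and $\F(V\oplus V^*)^G$ and the latter has index $|G|$ in $\F(V\oplus V^*)$, equality (C) follows.

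For (A), set $K:=\F(c_{n,0},u_{1-n},\ldots,u_{n-1})$; in view of (C) it suffices to show $c_{n,1},\ldots,c_{n,n-1}\in K$. The relations (\ref{Tj}) for $j=1,\ldots,n-1$ form a linear system
\begin{equation*}
\sum_{i=1}^{n-1}(-1)^{i}u_{i-j}^{q^{\min(i,j)}}\cdot c_{n,i}=-c_{n,0}u_{-j}+(-1)^{n+1}u_{n-j}^{q^j}
\end{equation*}
in the $n-1$ unknowns $c_{n,1},\ldots,c_{n,n-1}$ with coefficients and right-hand sides in $K$. Using the identity $u_{i-j}^{q^{\min(i,j)}}=\sum_k x_k^{q^i}y_k^{q^j}$ (valid for all $i,j\geq 0$), the coefficient matrix factors as a product of two rectangular Moore-type matrices; by Cauchy-Binet its determinant expands as a sum over $(n-1)$-element subsets $S\subseteq\{1,\ldots,n\}$ of products of Moore minors whose monomials involve exactly the variables $\{x_k,y_k:k\in S\}$. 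Distinct subsets $S$ yield monomials with different variable supports, so no cancellation occurs and the determinant is nonzero as an element of $\F[V\oplus V^*]$. Cramer's rule then places each $c_{n,i}$ in $K$, establishing (A). Finally, (B) follows from (A) by applying the involution $*$ and using $u_j^*=u_{-j}$.

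The main obstacle is the non-vanishing check for the $(n-1)\times(n-1)$ coefficient determinant in the middle step, which is resolved by the Cauchy-Binet decomposition into Moore minors together with the disjoint-support observation sketched above; everything else is formal.
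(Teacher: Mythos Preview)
Your proof is correct and takes a somewhat different route from the paper's.

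The paper starts from Bonnaf\'e--Kemper \cite[Proposition~1.1]{BK2011}, which already supplies a generating set $\{c_{n,i},\,c_{n,i}^{*},\,u_0\}$ for the invariant field, and then proves the first equality (A) \emph{first} by showing that every one of those generators lies in $L=\F(c_{n,0},u_{1-n},\dots,u_{n-1})$. This requires not only the linear system coming from $T_1,\dots,T_{n-1}$ (to get the $c_{n,i}$) but also $T_1^{*},\dots,T_{n-1}^{*}$ together with the determinantal relation $T_{00}$ (to get $c_{n,0}^{*}$ and then the other $c_{n,i}^{*}$). The third equality (C) is derived afterwards from (A).

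You invert the logic: you establish (C) directly by a self-contained degree argument (solve the linear system $u_j=\sum_i x_i^{q^j}y_i$ for the $y_i$, so $\F(V\oplus V^{*})=\F(V)(u_0,\dots,u_{n-1})$; then use linear disjointness of $\F(V)/\F(c_{n,\bullet})$ against the purely transcendental extension by the $u_j$ to get index exactly $|G|$). This avoids citing \cite{BK2011} altogether. With (C) already known, (A) becomes cheaper: it suffices to place $c_{n,1},\dots,c_{n,n-1}$ in $K$, so the starred relations and $T_{00}$ are never needed. Your Cauchy--Binet factorisation of the $(n-1)\times(n-1)$ coefficient matrix into Moore-type blocks, together with the disjoint-support observation, gives a clean explicit proof that the determinant is nonzero; the paper instead appeals to the algebraic independence of the $u_j$ (Lemma~\ref{algindep}), which also works once one checks the formal determinant does not vanish identically. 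Your approach is more self-contained; the paper's is shorter given the external input.
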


\begin{proof}
By Bonnaf\'e-Kemper \cite[Proposition 1.1]{BK2011}, 
$\F(V\oplus V^{*})^{G}$ is generated by $$\{c_{n,0},c_{n,1},\dots,c_{n,n-1},c_{n,0}^{*},c_{n,1}^{*},\dots,c_{n,n-1}^{*},u_{0}\}$$ over $\F$.
Define $L:=\F(c_{n,0},u_{1-n},\dots,u_{-1},u_{0},u_{1},\dots,u_{n-1})$. Obviously, $L\subseteq \F(V\oplus V^{*})^{G}$ and $L$ is generated by $2n$ polynomials. Thus to prove that $\F(V\oplus V^{*})^{G}=L$ is purely transcendental,
we need only to show that $c_{n,1},\dots,c_{n,n-1},c_{n,0}^{*},c_{n,1}^{*},\dots,c_{n,n-1}^{*}\in L.$
We write the  relations (\ref{T1}), (\ref{T2}), $\dots$, (\ref{Tn-1}) as the following matrix form:
\begin{equation*}
\nabla\cdot\begin{pmatrix}
        c_{n,1}  \\
        c_{n,2}\\
        \vdots\\
        c_{n,n-1}
\end{pmatrix}=\begin{pmatrix}
      (-1)^{n+1}u_{n-1}^{q} -c_{n,0}u_{-1}  \\
       (-1)^{n+1}u_{n-2}^{q^{2}} -c_{n,0}u_{-2}  \\
       \vdots\\
      (-1)^{n+1}u_{1}^{q^{n-1}}-c_{n,0}u_{1-n}
\end{pmatrix},
\end{equation*}
where $\nabla$ is an $(n-1)\times(n-1)$-matrix whose entries lie in $\F[u_{2-n},\dots,u_{0},\dots,u_{n-2}]$.
More explicitly, $\nabla$ is the matrix given by $\nabla_{j,i} = (-1)^{i}u_{i-j}^{q^{\min(i,j)}}$ for $1 \leq i \leq n-1$ and $1 \leq j \leq n-1$.
By Lemma~\ref{algindep} we have $\det(\nabla) \neq 0$. Thus each of 
$c_{n,1},c_{n,2},\dots,c_{n,n-1}$ can be expressed as a rational expression in  
$c_{n,0},u_{1-n},\dots,u_{0},\dots,u_{n-1}$.  Therefore $c_{n,1},c_{n,2},\dots,c_{n,n-1}\in L$.
A similar argument, using the relations (\ref{T1s}), (\ref{T2s}), $\dots$, (\ref{Tn-1s}),
shows that $c_{n,1}^{*},c_{n,2}^{*},\dots,c_{n,n-1}^{*}\in L(c_{n,0}^{*})$.   Thus it remains only to show that $c_{n,0}^{*}\in L$.
This follows from the relation (\ref{T00}). This completes the proof of the first equation.

Since $\F(V\oplus V^{*})^{G}$ is $*$-stable, the second equation of the proposition holds.

To show the third equation, we let $K:=\F(c_{n,0},c_{n,1},\dots,c_{n,n-1},u_{0},u_{1},\dots,u_{n-1})$. 
Clearly $K \subseteq \F(V\oplus V^{*})^{G}$. 
We have seen that $\F(V\oplus V^{*})^{G}=L=\F(c_{n,0},u_{1-n},u_{2-n},\dots,u_{0},u_{1},\dots,u_{n-1})$.
Thus it is sufficient to show that 
$u_{-1},u_{-2},\dots,u_{1-n}\in K.$ By the relation (\ref{T1}), we have
\begin{equation*}
u_{-1}\in K
\end{equation*}
which, together with the relation (\ref{T2}), implies that 
\begin{equation*}
u_{-2}\in K.
\end{equation*}
Proceeding in this way, using the relations  (\ref{T1}), (\ref{T2}), $\dots$, (\ref{Tn-1}) (in this order) respectively, 
 we see that $u_{-1},u_{-2},\dots,u_{1-n}\in K.$
\end{proof}

\begin{prop}
$B_{\infty}\subseteq A$.
\end{prop}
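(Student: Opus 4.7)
My plan is to observe that every generator of $B_{\infty}$ other than $u_{-n}, u_{-n-1}, u_{-n-2}, \dots$ already appears among the generators of $A$, so the proposition reduces to showing that $u_{-n-k} \in A$ for every $k \geq 0$. I will establish this by induction on $k$, using the relation (\ref{T0s}) as the base case and its iterated images under the $\F$-algebra endomorphism $F^{*}$ for the inductive step.

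For the base case $k = 0$, the relation (\ref{T0s}) together with $c_{n,n}^{*} = 1$ allows me to solve for $u_{-n}$ as a polynomial expression in $c_{n,0}^{*}, \dots, c_{n,n-1}^{*}$ and $u_{0}, u_{-1}, \dots, u_{1-n}$, all of which are generators of $A$; hence $u_{-n} \in A$.

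For the inductive step with $k \geq 1$, I would apply $(F^{*})^{k}$ to both sides of (\ref{T0s}). Using the facts (immediate from the definitions) that $F^{*}$ fixes $\F[V]$ pointwise, raises elements of $\F[V^{*}]$ to their $q$-th power (because their $\F_{q}$-coefficients are fixed by the $q$-Frobenius), and satisfies $F^{*}(u_{-i}) = u_{-i-1}$ for $i \geq 0$, the resulting identity is
$$\sum_{i=0}^{n} (-1)^{i}\, (c_{n,i}^{*})^{q^{k}}\, u_{-i-k} \;=\; 0.$$
Isolating the $i = n$ term expresses $\pm u_{-n-k}$ as an $A$-linear combination of $u_{-k}, u_{-k-1}, \dots, u_{-n-k+1}$. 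Among these, the $u_{j}$'s with $j \geq 1-n$ are generators of $A$, while those with $j \leq -n$, namely $u_{-n}, u_{-n-1}, \dots, u_{-n-k+1}$, lie in $A$ by the inductive hypothesis. This yields $u_{-n-k} \in A$ and completes the induction. There is essentially no substantial obstacle here; the only point worth a moment's verification is that $F^{*}$ acts as the $q$-th power map on $\F[V^{*}] = \F_{q}[y_{1}, \dots, y_{n}]$, which is automatic since all coefficients lie in $\F_{q}$.
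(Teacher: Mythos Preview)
Your proposal is correct and follows essentially the same approach as the paper's proof: both use the relation (\ref{T0s}) to get $u_{-n}\in A$, and then apply $F^{*}$ repeatedly to this relation to deduce inductively that $u_{-n-k}\in A$ for all $k\geq 1$. The only cosmetic difference is that the paper applies $F^{*}$ one step at a time (obtaining the relations $(T_{-j}^{*})$), whereas you apply $(F^{*})^{k}$ directly; the resulting identity and the inductive argument are the same.
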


\begin{proof}
It suffices to show that  $B_{k}\subseteq A$ for all $k\in\N$. By the relation (\ref{T0s}), we see that 
\begin{equation}
\label{u-n}
u_{-n}\in A.
\end{equation}
Applying the map $F^{*}$ to (\ref{T0s}), yields
\begin{equation}
\tag{$T_{-1}^{*}$}\label{T-1s}
c_{n,0}^{*q}u_{-1}-c_{n,1}^{*q}u_{-2}+c_{n,2}^{*q}u_{-3}-\cdots+(-1)^{n-1}c_{n,n-1}^{*q}u_{-n}+(-1)^{n}u_{-n-1}=0
\end{equation}
which, together with (\ref{u-n}), shows that  $u_{-n-1}\in A$. 
Continuing to apply $F^*$ in this manner, we obtain
\begin{equation}
\tag{$T_{-j}^{*}$}\label{T-js}
c_{n,0}^{*{q^j}}u_{-j}-c_{n,1}^{*{q^j}}u_{-j-1}+c_{n,2}^{*{q^j}}u_{-j-2}-\cdots+(-1)^{n-1}c_{n,n-1}^{*{q^j}}u_{-j-n+1}
       +(-1)^{n}u_{-j-n}=0
\end{equation}
for all $j \geq 1$.
Thus 
$$u_{-n},u_{-n-1},\dots,u_{-n-k+1}\in A.$$
Therefore, $B_{k}\subseteq A$.
\end{proof}

The corollary below follows immediately.
\begin{coro}\label{coro}
For any $k\in\N$, we have
 $\Qt(B_{k})=\Qt(B_{\infty})=\Qt(A)=\F(V\oplus V^{*})^{G}$.
 \end{coro}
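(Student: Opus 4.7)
The plan is to build a chain of inclusions that forces all four fields to coincide, leveraging the rationality result already obtained in Proposition~\ref{invfield}. First, directly from the definitions one has $B_0 \subseteq B_k \subseteq B_\infty$ for every $k\in\N$, and the immediately preceding proposition supplies $B_\infty \subseteq A$. Since every generator of $A$ (the Dickson invariants $c_{n,i}$, $c_{n,i}^*$ together with the $u_j$ for $1-n \leq j \leq n-1$) is manifestly $G$-invariant, we also have $A \subseteq \F[V\oplus V^*]^G$. Passing to fraction fields therefore yields the chain
\[
\Qt(B_0)\;\subseteq\;\Qt(B_k)\;\subseteq\;\Qt(B_\infty)\;\subseteq\;\Qt(A)\;\subseteq\;\F(V\oplus V^*)^G.
\]

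The second (and only substantive) step is to verify the reverse inclusion $\F(V\oplus V^*)^G \subseteq \Qt(B_0)$, which then collapses the entire chain to a single equality. For this I would invoke the third equality of Proposition~\ref{invfield}, namely
\[
\F(V\oplus V^*)^G=\F(c_{n,0},c_{n,1},\dots,c_{n,n-1},u_0,u_1,\dots,u_{n-1}).
\]
Each generator on the right-hand side already appears among the defining generators of $B_0 = \F[c_{n,0},\dots,c_{n,n-1},u_{n-1},\dots,u_0,\dots,u_{1-n}]$, so this field is contained in $\Qt(B_0)$. Combined with the inclusion chain above, all four fraction fields coincide with $\F(V\oplus V^*)^G$. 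There is no real obstacle here: the substantive work was carried out in Proposition~\ref{invfield}, and the corollary is essentially a matter of reading off the defining generators of $B_0$, $B_\infty$, and $A$, which is presumably why the author calls the result ``immediate''.
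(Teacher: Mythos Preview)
Your argument is correct and matches the paper's intended reasoning: the paper states the corollary ``follows immediately'' from the preceding proposition (which gives $B_\infty\subseteq A$) together with Proposition~\ref{invfield}, and your chain of inclusions closed off by the third equality of Proposition~\ref{invfield} is precisely the unpacking of that remark.
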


\subsection{Localized polynomial ring} This section is devoted to showing that localizing 
$\F[V\oplus V^{*}]^{G}$ with respect to the multiplicative set $\{c_{n,0}^m \mid m \in \N\}$
 yields is a localized polynomial ring.  We begin by proving a key lemma.

\begin{lem}\label{key lemma}
  Let $\alpha =  f_{1}^{a_{1}}f_2^{a_2}\cdots f_{n}^{a_{n}}\cdot f_{1}^{*b_{1}}f_2^{*b_2}\cdots f_{n}^{*b_{n}}$.\\
  There exists $r_0 \in \N$ such that $c_{n,0}^r \,\alpha \in 
  \F[f_1,f_2,\dots,f_n,u_{1-n},u_{2-n},\dots,u_{n-1}]$ for all $r \geq r_0$.\\
 Furthermore $c_{n,0}^r R_U^G(\alpha) \in \F[c_{n,0},c_{n,1}\dots,c_{n,n-1},u_{1-n},u_{2-n},\dots,u_{n-1}]$
 for all $r \geq r_0$.
\end{lem}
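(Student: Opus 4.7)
The plan is to reduce the lemma to a single-factor statement and then proceed by induction. Set $T := \F[f_1, \ldots, f_n, u_{1-n}, \ldots, u_{n-1}]$ for the target ring. The reduction step is: if for each $j \in \{1, \ldots, n\}$ one has $c_{n,0}^{s_j} f_j^* \in T$ for some $s_j \in \N$, then with $r_0 := \sum_{j=1}^n s_j b_j$ we obtain, for every $r \geq r_0$,
\[
c_{n,0}^r \alpha \;=\; c_{n,0}^{r - r_0} \cdot \prod_{i=1}^n f_i^{a_i} \cdot \prod_{j=1}^n \bigl(c_{n,0}^{s_j} f_j^*\bigr)^{b_j} \;\in\; T,
\]
since each factor on the right lies in $T$. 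The ``furthermore'' clause then follows on applying $R_U^G$: writing $c_{n,0}^r \alpha = \sum_\mu M_\mu \, P_\mu(f_1, \ldots, f_n)$ with $M_\mu$ a monomial in the $G$-invariants $u_{1-n}, \ldots, u_{n-1}$ and $P_\mu \in \F[V]^U$, the $\F[V \oplus V^*]^G$-linearity of $R_U^G$ combined with Campbell--Hughes (\ref{CHt})---which gives $R_U^G(\F[V]^U) = \F[V]^G = \F[c_{n,0}, \ldots, c_{n,n-1}]$---yields $c_{n,0}^r R_U^G(\alpha) \in \F[c_{n,0}, \ldots, c_{n,n-1}, u_{1-n}, \ldots, u_{n-1}]$.

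The heart of the proof is the single-factor claim $c_{n,0}^{s_j} f_j^* \in T$, which I would prove by induction on $j$. The base case $j = 1$ follows from (\ref{Rn}): $f_n f_1^* = \sum_{i=0}^{n-1}(-1)^{i+n+1} c_{n-1, i} u_i$, and since each $c_{n-1, i}$ is a $U_{n-1}$-invariant of $\F[x_1, \ldots, x_{n-1}]$, it lies in the Mui algebra $\F[f_1, \ldots, f_{n-1}] \subseteq T$; multiplying through by $c_{n,0}/f_n = f_n^{q-2} \prod_{i<n} f_i^{q-1} \in \F[f_1, \ldots, f_n]$ gives $c_{n,0} f_1^* \in T$. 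For the inductive step ($j \geq 2$), the extreme case $j = n$ is controlled by the $\ast$-image of (\ref{Rn}), namely $f_1 f_n^* = \sum_{i=0}^{n-1}(-1)^{i+n+1} \ast(c_{n-1, i}) \, u_{-i}$, where $\ast(c_{n-1, i}) \in \F[y_2, \ldots, y_n]$: by Mui's theorem applied to $\langle y_n, y_{n-1}, \ldots, y_2 \rangle$, each such coefficient is a polynomial in $f_1^*, \ldots, f_{n-1}^*$, which the inductive hypothesis absorbs into $T$ at the cost of a suitable power of $c_{n,0}$. For intermediate $j$ I would construct an $n \times n$ determinantal identity whose first $n - j$ rows come from the Moore matrix $(x_k^{q^i})$ and whose bottom $j$ rows are shifted $u$-rows $(u_a, u_{a+1}, \ldots, u_{a+n-1})$ with shifts $a$ chosen in $\{1-n, \ldots, n-1\}$; expanding by multilinearity in the $y$-variables (using $u_k = \sum_l x_l^{q^k} y_l$ together with its negative analogue) isolates a Frobenius-twisted $y$-monomial whose $\F_q$-symmetrization is $f_j^*$ modulo lower $f_i^*$'s, while the relations (\ref{Tj}) and (\ref{Tjs}) keep all emerging $u$-subscripts in the range $[1-n, n-1]$ and the inductive hypothesis absorbs the lower-$f_i^*$ corrections.

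The main obstacle is precisely this inductive construction: pinpointing the correct $n \times n$ determinantal identity and verifying that every emergent error term can be absorbed into $T$ after multiplication by a bounded power of $c_{n,0}$. This bookkeeping parallels Bonnaf\'e--Kemper's derivation of the defining relations of $\F[V \oplus V^*]^U$ as a complete intersection, and the relations (\ref{T0})--(\ref{Tn-1s}) together with (\ref{T00}) from the preceding section are precisely the tools engineered to make it feasible.
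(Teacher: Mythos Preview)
Your reduction to a single-factor claim and your derivation of the ``furthermore'' clause are both correct and match the paper's proof exactly (the citation of Campbell--Hughes is inessential; all you need is that $R_U^G$ carries $\F[V]^U$ into $\F[V]^G=\F[c_{n,0},\dots,c_{n,n-1}]$, which is Dickson).  Where you diverge is in the inductive step for intermediate $j$: you propose constructing a new $n\times n$ determinantal identity, but this is unnecessary.  Bonnaf\'e--Kemper \cite[page 105]{BK2011} already record the \emph{full} family of relations $(R_k)$, one for each $k=1,\dots,n$, not just $(R_n)$ and its $*$-image.  The relation $(R_{n+1-j})$ gives precisely
\[
  f_{n+1-j}\cdot f_j^{*}\in \F[f_1,\dots,f_{n-j},\,f_1^{*},\dots,f_{j-1}^{*},\,u_{1-n},\dots,u_{n-1}],
\]
so multiplying by $c_{n,0}/f_{n+1-j}=f_{n+1-j}^{\,q-2}\prod_{l\neq n+1-j}f_l^{\,q-1}\in\F[f_1,\dots,f_n]$ and invoking the inductive hypothesis on $f_1^{*},\dots,f_{j-1}^{*}$ immediately yields $c_{n,0}^{s_j}f_j^{*}\in T$.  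This is exactly what the paper does (in compressed form): it quotes $(R_k)$, records the consequence displayed above, and observes that iterating it against powers of $c_{n,0}=(f_1\cdots f_n)^{q-1}$ eliminates all $f^{*}$-factors.  Your determinantal scheme would ultimately reproduce $(R_k)$, so it is not wrong, but it reinvents machinery already available from the source you are citing.
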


\begin{proof}
  The following relation is from Bonnaf\'e-Kemper \cite[page 105]{BK2011}.
  \begin{equation*} \tag{$R_k$} \label{Rk} 
  f_k \cdot f^*_{n+1-k} - \sum_{i=0}^{k-1} \sum_{j=0}^{n-k} (-1)^{i+j+n+1}c_{k-1,j}\, c^*_{n-k,j}\cdot u_{i-j}^{q^{\min(i,j)}} = 0.
  \end{equation*}
  Here $c_{k-1,j} \in \F[f_1,f_2,\dots,f_{k-1}] \subset \F[x_1,x_2,\dots,x_{k-1}]$ and
  $c^*_{n-k,j} \in \F[f^*_1,f^*_2,\dots,f^*_{n-k}] \subset \F[y_n,y_{n-1},\dots,y_{k+1}]$.
  
  The relation $(R_{k})$ implies that
\begin{equation}
\label{Rii}
f_{n+1-i}\cdot f^{*}_{i}\in \F[f_{1},\dots,f_{n-i},f_{1}^{*},\dots,f_{i-1}^{*},u_{n-1},\dots,u_{0},\dots,u_{1-n}]
\end{equation}
for all $i=1,2,\dots,n$. Since $c_{n,0}=(f_{1}f_{2}\cdots f_{n})^{q-1}$,  there exists some 
$r_0\in\N$ such that
\begin{equation*}
c_{n,0}^{r_0}\cdot\alpha\in \F[f_{1},\dots,f_{n},u_{n-1},\dots,u_{0},\dots,u_{1-n}].
\end{equation*}

The second assertion follows from
\begin{align*}
    c_{n,0}^r R_U^G(\alpha) = R_U^G(c_{n,0}^r \alpha) 
 &\in R_U^G(\F[f_1,f_2,\dots,f_n,u_{1-n},u_{2-n},\dots,u_{n-1}])\\ 
 &= \F[c_{n,0},c_{n,1},\dots,c_{n,n-1},u_{1-n},u_{2-n},\dots,u_{n-1}].
 \end{align*}
\end{proof}

\begin{prop}\label{local}
 $\F[V\oplus V^{*}]^{G}[c_{n,0}^{-1}]=A[c_{n,0}^{-1}]=\F[c_{n,0},c_{n,1},\dots,c_{n,n-1},u_{0},u_{1},\dots,u_{n-1}][c_{n,0}^{-1}]$.
 \end{prop}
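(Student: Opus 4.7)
The plan is to establish the double equality by showing the cyclic chain of inclusions
\[ C[c_{n,0}^{-1}] \;\subseteq\; A[c_{n,0}^{-1}] \;\subseteq\; \F[V\oplus V^{*}]^{G}[c_{n,0}^{-1}] \;\subseteq\; C[c_{n,0}^{-1}], \]
where $C := \F[c_{n,0},c_{n,1},\dots,c_{n,n-1},u_{0},u_{1},\dots,u_{n-1}]$. The first two inclusions are immediate from the evident containments $C \subseteq A \subseteq \F[V\oplus V^{*}]^{G}$, so the only content is the third.

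For the third inclusion I would first land in the intermediate ring $C' := \F[c_{n,0},c_{n,1},\dots,c_{n,n-1},u_{1-n},u_{2-n},\dots,u_{n-1}]$. Given $g \in \F[V\oplus V^{*}]^{G} \subseteq \F[V\oplus V^{*}]^{U}$, I use the Bonnaf\'e-Kemper description (\ref{BKt}) to write $g = \sum_{\beta} p_{\beta}\,\beta$, where $\beta$ ranges over finitely many monomials $f_{1}^{a_{1}}\cdots f_{n}^{a_{n}}f_{1}^{*b_{1}}\cdots f_{n}^{*b_{n}}$ and each $p_{\beta}\in \F[u_{2-n},\dots,u_{n-2}] \subseteq \F[V\oplus V^{*}]^{G}$. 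Because $p_{\beta}$ and $g$ are both $G$-invariant, applying the Reynolds operator $R_{U}^{G}$ (which is an $\F[V\oplus V^{*}]^{G}$-module homomorphism fixing $g$) yields $g = \sum_{\beta} p_{\beta}\,R_{U}^{G}(\beta)$. Lemma \ref{key lemma} supplies, for each $\beta$, an exponent $r_{\beta}$ with $c_{n,0}^{r_{\beta}} R_{U}^{G}(\beta) \in C'$; choosing $r$ larger than every $r_{\beta}$ (the sum is finite) gives $c_{n,0}^{r} g \in C'$, and hence $g \in C'[c_{n,0}^{-1}]$.

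To finish, I use the relations $(T_{j})$ to eliminate the negative-index $u$'s modulo $c_{n,0}$. Rewriting $(T_{1})$ as $c_{n,0}u_{-1} = \sum_{i=1}^{n}(-1)^{i+1} c_{n,i}\, u_{i-1}^{q}$ places $u_{-1}$ in $C[c_{n,0}^{-1}]$. Substituting this into $(T_{2})$ places $u_{-2}$ in $C[c_{n,0}^{-1}]$, and iterating through $(T_{n-1})$ shows $u_{-1}, u_{-2}, \dots, u_{1-n} \in C[c_{n,0}^{-1}]$. Consequently $C'[c_{n,0}^{-1}] \subseteq C[c_{n,0}^{-1}]$, closing the chain.

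The main obstacle is the middle paragraph: one must verify that every $G$-invariant admits a presentation in which the $f_{i}, f_{j}^{*}$ monomials are separated from the $u$-coefficients $p_{\beta}$, which are then seen to be themselves $G$-invariant so that they pass through $R_{U}^{G}$. Once this bookkeeping is in place, Lemma \ref{key lemma} does the real work in one step, and the relations $(T_{j})$ mechanically complete the descent from $C'$ to $C$.
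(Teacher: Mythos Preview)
Your argument is correct and follows essentially the same route as the paper: both use Lemma~\ref{key lemma} to absorb the $f_j^*$'s at the cost of a power of $c_{n,0}$, then apply $R_U^G$ to land in $\F[c_{n,0},\dots,c_{n,n-1},u_{1-n},\dots,u_{n-1}]$, and finally use the relations $(T_1),\dots,(T_{n-1})$ to eliminate $u_{-1},\dots,u_{1-n}$.

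The one organizational difference is worth noting. The paper reaches $\F[V\oplus V^{*}]^{G}[c_{n,0}^{-1}]\subseteq A[c_{n,0}^{-1}]$ by invoking the Campbell--Hughes decomposition (\ref{Amodule}), which expresses $\F[V\oplus V^*]^U$ as an $A$-module spanned by the finite set $\Omega$, and then applies Lemma~\ref{key lemma} to each $R_U^G(\omega)$; it must then separately argue that each $c_{n,i}^{*}$ lies in $E[c_{n,0}^{-1}]$. You bypass $\Omega$ (and hence Campbell--Hughes entirely for this proposition) by writing an arbitrary $g$ in the Bonnaf\'e--Kemper generators and grouping the $u$-monomials as $G$-invariant coefficients that slide through $R_U^G$; this lands you directly in $C'[c_{n,0}^{-1}]$ without a detour through $A$, and no separate treatment of the $c_{n,i}^{*}$ is needed. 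Your worry about the ``bookkeeping'' is unfounded: the decomposition $g=\sum_\beta p_\beta\,\beta$ is just collecting monomials in the $f_i,f_j^{*}$, and each $p_\beta$ is a polynomial in the $u_k$, hence automatically $G$-invariant.
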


\begin{proof}
Let $E:=\F[c_{n,0},c_{n,1},\dots,c_{n,n-1},u_{0},u_{1},\dots,u_{n-1}]$ be the polynomial ring. For the first assertion, 
we note that the Reynolds operator $R_{U}^{G}:\F[V\oplus V^{*}]^{U}\ra \F[V\oplus V^{*}]^{G}$ is a surjective $A$-module homomorphism.
By (\ref{Amodule}), $\F[V\oplus V^*]^G = \sum_{\omega \in \Omega} A\cdot R_U^G(\omega)$.
Lemma~\ref{key lemma} implies that there exists $r \in \N$ such that
$c_{n,0}^r R_U^G(\omega) \in  \F[c_{n,0},c_{n,1}\dots,c_{n,n-1},u_{1-n},u_{2-n},\dots,u_{n-1}] \subset A$.
 Therefore $R_U^G(\omega) \in A[c_{n,0}^{-1}]$ for all $\omega \in \Omega$.
 Thus $\F[V\oplus V^*]^G \subset A[c_{n,0}^{-1}]$.

To show that $A[c_{n,0}^{-1}]=E[c_{n,0}^{-1}]$, it is sufficient to show that $u_{-1},u_{-2},\dots,u_{1-n}, c_{n,0}^{*}, c_{n,1}^{*},\dots,c_{n,n-1}^{*}\in E[c_{n,0}^{-1}]$. By the relation (\ref{T1}) we see that $u_{-1}\in E[c_{n,0}^{-1}]$, which together with 
the relation (\ref{T2}), implies that $u_{-2}\in E[c_{n,0}^{-1}]$. 
Proceeding in this way by using the relations (\ref{T1}), (\ref{T2}), \dots, (\ref{Tn-1}), we obtain 
\begin{equation}
\label{ue}
u_{-1},u_{-2},\dots,u_{1-n}\in E[c_{n,0}^{-1}].
\end{equation}
Since $c_{n,0}^{*}, c_{n,1}^{*},\dots,c_{n,n-1}^{*} \in \F[V^*]^U = \F[f^*_1,f^*_2,\dots,f^*_n]$
 it follows from Lemma~\ref{key lemma} that there exists some $k\in \N$ such that 
\begin{equation*}
c_{n,0}^{k}\cdot c_{n,i}^{*}\in \F[f_{1},\dots,f_{n},u_{n-1},\dots,u_{0},\dots,u_{1-n}].
\end{equation*}
for $1\leqslant i\leqslant n$. Thus
\begin{equation*}
c_{n,0}^{k}\cdot c_{n,i}^{*}=R_{U}^{G}(c_{n,0}^{k}\cdot c_{n,i}^{*})\in \F[c_{n,0},c_{n,1},\dots,c_{n,n-1},u_{n-1},\dots,u_{0},\dots,u_{1-n}].
\end{equation*}
This, together with (\ref{ue}) implies that 
$c_{n,0}^{*}, c_{n,1}^{*},\dots,c_{n,n-1}^{*}\in E[c_{n,0}^{-1}]$.
 Therefore $A[c_0^{-1}] \subseteq E[c_0^{-1}]$.
\end{proof}

\subsection{Unique Factorization Domains} 

The bulk of this section will be devoted to proving that $B_k$  is a complete intersection and a unique factorization domain.  

We let $\Frob$ denote the Frobenius homorphism given by $\Frob(z) = z^p$.
We will use the following lemma which is easy to prove.
  \begin{lem}
     Let $\KK$ be a field of characteristic $p$ and let $\lambda \in \KK$ and let $t$ be a positive integer.  The polynomial $x^{p^t}-\lambda$ is reducible in $\KK[x]$ if and only if there exists $\nu \in \KK$ such that
     $\nu^p=\lambda$, i.e., if and only if $\lambda \in \Frob(\KK)$. 
  \end{lem}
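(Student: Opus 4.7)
The plan is to pass to an algebraic closure $\bar\KK$ and exploit the characteristic $p$ identity
\[
  x^{p^t}-\lambda \;=\; (x-\mu)^{p^t},
\]
where $\mu\in\bar\KK$ is the unique element with $\mu^{p^t}=\lambda$ (uniqueness because $y^{p^t}-\lambda$ is purely inseparable). Every monic divisor of $x^{p^t}-\lambda$ in $\bar\KK[x]$ therefore has the form $(x-\mu)^k$ with $0\le k\le p^t$, so reducibility over $\KK$ is controlled by which powers $\mu^k$ already lie in $\KK$.

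The converse direction is immediate from the Freshman's dream: if $\nu\in\KK$ with $\nu^p=\lambda$, then
\[
  x^{p^t}-\lambda \;=\; x^{p^t}-\nu^p \;=\; \bigl(x^{p^{t-1}}-\nu\bigr)^p,
\]
which is a proper factorization in $\KK[x]$ for $t\ge 1$. For the forward direction, suppose $x^{p^t}-\lambda=g(x)h(x)$ with $g$ monic of degree $k$ and $0<k<p^t$. In $\bar\KK[x]$ we must then have $g(x)=(x-\mu)^k$, and inspection of the constant term gives $\mu^k\in\KK$. Since $\mu^{p^t}=\lambda\in\KK$ as well, a B\'ezout argument applied to the unit $\mu$ (the trivial subcase $\mu=0$ forces $\lambda=0$, for which $\nu=0$ works) yields $\mu^{\gcd(k,\,p^t)}\in\KK$. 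Writing $\gcd(k,p^t)=p^s$ with $s\le t-1$, we set $\nu:=\mu^{p^{t-1}}=(\mu^{p^s})^{p^{t-1-s}}\in\KK$ and obtain $\nu^p=\mu^{p^t}=\lambda$, so $\lambda\in\Frob(\KK)$.

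I do not anticipate any serious obstacle; the content is essentially that the purely inseparable tower $\KK\subset\KK(\mu^{p^{t-1}})\subset\cdots\subset\KK(\mu)$ collapses as soon as any proper factor is defined over $\KK$, forcing the bottom step $\KK(\mu^{p^{t-1}})$ to coincide with $\KK$, and this step is generated by a $p$-th root of $\lambda$. The only care needed is to split off the trivial case $\lambda=0$ and to verify the B\'ezout manipulation when $\mu\ne 0$, both of which are routine.
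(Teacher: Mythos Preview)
Your proof is correct. The paper does not actually give a proof of this lemma; it merely states that the lemma ``is easy to prove'' and moves on. Your argument---factoring $x^{p^t}-\lambda=(x-\mu)^{p^t}$ over $\bar\KK$, reading off $\mu^k\in\KK$ from the constant term of a proper monic factor, and using a B\'ezout/gcd argument on the exponent to extract $\mu^{p^{t-1}}\in\KK$---is a standard and complete way to establish the result, so there is nothing further to compare.
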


The main result of this section is the following.
\begin{prop}\label{Bk}
 $B_{k}$ is a complete intersection and a unique factorization domain for all $k \geq 0$,
\end{prop}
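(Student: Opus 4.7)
My plan is to exhibit $B_k$ as an explicit complete intersection, and then invoke Nagata's criterion to deduce the unique factorization property. Let $R_k := \F[C_0, C_1, \ldots, C_{n-1}, U_{n-1}, U_{n-2}, \ldots, U_{1-n-k}]$ be the polynomial ring in $3n-1+k$ indeterminates, and let $\pi_k : R_k \twoheadrightarrow B_k$ be the natural surjection sending capitals to the corresponding lowercase elements. For each $j \in \{1, 2, \ldots, n-1+k\}$ the relation $(T_j)$ lifts to
$$\tilde{\rho}_j := C_0 U_{-j} + h_j \in \ker \pi_k,$$
where $h_j \in \F[C_1, \ldots, C_{n-1}, U_{1-j}, U_{2-j}, \ldots, U_{n-j}]$ involves neither $C_0$ nor any $U_{-m}$ with $m \geq j$. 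The key triangular feature is that $\tilde\rho_j$ is linear in $U_{-j}$ with coefficient $C_0$, and $U_{-j}$ appears in no other $\tilde\rho_i$.

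\emph{Stage 1 (complete intersection).} I argue inductively that $\tilde\rho_1, \ldots, \tilde\rho_{n-1+k}$ form a regular sequence in $R_k$ generating $\ker \pi_k$. Assuming $S_{j-1} := R_k / (\tilde\rho_1, \ldots, \tilde\rho_{j-1})$ is a domain in which $C_0$ is a non-zero-divisor, the localization $S_{j-1}[C_0^{-1}]$ presents as a polynomial ring in the remaining variables localized at $C_0$ (each prior $\tilde\rho_i$ having been used to eliminate $U_{-i}$); there $\tilde\rho_j$ is linear in $U_{-j}$ with unit coefficient $C_0$, hence prime. Coprimality of $C_0$ and $\tilde\rho_j$ inside $S_{j-1}$ (visible because $h_j$ is free of $C_0$) ensures $C_0$ remains a non-zero-divisor modulo $\tilde\rho_j$, closing the induction. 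After $n-1+k$ steps $S_{n-1+k}$ is a $2n$-dimensional domain surjecting onto the $2n$-dimensional domain $B_k$, so the surjection must be an isomorphism and $B_k$ is a complete intersection.

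\emph{Stage 2 (unique factorization via Nagata).} By Proposition \ref{local}, $B_k[c_{n,0}^{-1}] = E[c_{n,0}^{-1}]$ with $E := \F[c_{n,0}, c_{n,1}, \ldots, c_{n,n-1}, u_0, u_1, \ldots, u_{n-1}]$, a polynomial ring in $2n$ algebraically independent generators by Proposition \ref{invfield}. Hence $B_k[c_{n,0}^{-1}]$ is a UFD, and by Nagata's criterion it suffices to show $c_{n,0}$ is prime in $B_k$. From Stage 1,
$$B_k/(c_{n,0}) \cong \F[\overline{C_1}, \ldots, \overline{C_{n-1}}, \overline{U_{n-1}}, \ldots, \overline{U_{1-n-k}}] \big/ (h_1, \ldots, h_{n-1+k}),$$
and each $h_j$, viewed as a polynomial in its highest-index variable $\overline{U_{n-j}}$, has the form $(-1)^n \overline{U_{n-j}}^{q^{\min(n,j)}} + g_j$ with $g_j$ free of $\overline{U_{n-j}}$. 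The lemma preceding Proposition \ref{Bk} reduces the irreducibility of $h_j$ modulo $(h_1, \ldots, h_{j-1})$ to checking that $(-1)^{n+1} g_j$ is not a $p$-th power in the fraction field of the previous quotient; since each monomial of $g_j$ contains some $\overline{C_i}$ linearly, this non-membership persists through the induction, making $B_k/(c_{n,0})$ a domain.

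\emph{Main obstacle.} The delicate point is the $p$-th-power non-membership check at each inductive step of Stage 2: the earlier relations $h_1, \ldots, h_{j-1}$ could \emph{a priori} permit $g_j$ to acquire a $p$-th-power representation in the intermediate quotient field. I expect to handle this by maintaining throughout the induction an explicit transcendence basis for the intermediate fraction field, so that $p$-th-power membership can be decided by exponent bookkeeping on a monomial basis.
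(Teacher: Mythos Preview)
Your Stage~1 is essentially the paper's argument (regular sequence, $C_0$ a non-zerodivisor, localization at $C_0$ a polynomial ring, dimension count), so no issue there.

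Stage~2, however, has a structural gap beyond the ``main obstacle'' you flag.  You propose to process the $h_j$ in increasing order and, at step $j$, regard $h_j$ as a polynomial of the form $(-1)^n U_{n-j}^{q^{\min(n,j)}}+g_j$ over the quotient by $(h_1,\dots,h_{j-1})$.  But $U_{n-j}$ already occurs in those earlier relations: for each $i$ with $\max(1,j-n+1)\le i\le j-1$, the summand with $\ell=n-j+i$ in $h_i$ contributes $(-1)^{\ell}C_{\ell}\,U_{n-j}^{q^{\min(\ell,i)}}$.  (Concretely, for $n=3$, $j=2$: $h_1=-C_1U_0^q+C_2U_1^q-U_2^q$ already involves $U_1=U_{n-j}$.)  Hence the previous quotient is \emph{not} a polynomial ring in $U_{n-j}$ over anything, and the lemma on $x^{p^t}-\lambda$ does not apply as stated.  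Reversing the order of the $h_j$ would repair this triangularity, but even then your proposed $p$-th-power test (``each monomial of $g_j$ contains some $C_i$ linearly'') only rules out $p$-th powers in the ambient polynomial ring; it does not survive passage to the fraction field of the intermediate quotient, where the $C_i$ and $U$'s are already algebraically entangled.

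The paper bypasses this difficulty by a change of variables rather than relation-by-relation irreducibility.  Setting $\tilde C_i:=C_{i+1}$ and $\tilde U_j:=U_{j+1}$ for $j\ge 0$, $\tilde U_j:=U_{j+1}^q$ for $j\le -1$, the images $\overline{h_j}$ become precisely the defining relations $T^{n-1}_j$ of the analogous ring with $n$ replaced by $n-1$.  Thus $\bar S^n_k/(C_0)$ is identified with $\bar S^{n-1}_{k+1}$ (a domain by the Stage-1 argument) with $q$-th roots $Z_{-i}$ of the \emph{generators} $\tilde U_{-i}$ adjoined.  The required $p$-th-power checks then take the clean form $\tilde U_{-i}\notin\Frob(Q_{i-1})$ in a tower over the purely transcendental field $\Qt(\bar S^{n-1}_{k+1})$, which is exactly what the irreducibility lemma is designed for.
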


\begin{proof} 
First we will prove that $B_k$ is a complete intersection.

We introduce formal variables $C_{0},C_{1},\dots,C_{n-1},U_{n-1},U_{n-2},\dots,U_{0},\dots,U_{-n-k+1}$
and define a map
$$\rhoup: S_k=\F[C_{0},C_{1},\dots,C_{n-1}, U_{n-1},\dots,U_{0},\dots,U_{-n-k+1}]\ra B_{k}$$
by $\rhoup(C_{i}) = c_{n,i}$  and $\rhoup(U_{j}) = u_{j}$.

The relation (\ref{T1}) corresponds to an element (\ref{Tn1})
in the kernel of $\rhoup$.  
Applying $F^*$ repeatedly to (\ref{T1}) yields further relations in $B_k$ and
so also the following corresponding elements of the kernel of $\rhoup$.

\begin{align*}
\tag{$T^n_{1}$} \label{Tn1}
C_0 U_{-1} - C_{1} U_{0}^{q} + C_{2} U_{1}^{q}& - \dots+(-1)^{n-1} C_{n-1} U_{n-2}^{q}+(-1)^{n}U_{n-1}^q \\
\tag{$T^n_{2}$} \label{Tn2}
C_0 U_{-2} - C_{1} U_{-1}^{q} + C_{2} U_{0}^{q^2}& - \dots+(-1)^{n-1} C_{n-1} U_{n-3}^{q^2}+(-1)^{n}U_{n-2}^{q^2}\\
&\vdots\\
 \tag{$T^n_{n}$} \label{Tnn}
C_{0} U_{-n} - C_{1}U_{1-n}^{q}+C_{2}U_{2-n}^{q^{2}}&-\dots+(-1)^{n-1}C_{n-1}U_{-1}^{q^{n-1}}+(-1)^{n}U_{0}^{q^{n}}\\
\tag{$T^n_{n+1}$} \label{Tnn+1}
C_{0}U_{-(n+1)}-C_{1}U_{-n}^{q}+C_{2}U_{1-n}^{q^{2}}&-\dots+(-1)^{n-1}C_{n-1}U_{-2}^{q^{n-1}}+(-1)^{n}U_{-1}^{q^{n}}\\
&\vdots\\
\tag{$T^n_{n+k-1}$}\label{Tnn+k-1}
C_{0}U_{-(n+k-1)}-C_{1}U_{-(n+k)+2}^{q}+C_{2}U_{-(n+k)+3}^{q^{2}}&-\dots+(-1)^{n-1}C_{n-1}U_{-k}^{q^{n-1}}+(-1)^{n}U_{-(k-1)}^{q^{n}}.
\end{align*}

In general we have 
\begin{equation}
\tag{$T^n_{j}$}\label{Tnj}
  \sum_{i=0}^{n-1} (-1)^i C_i U_{-j+i}^{q^{\min(i,j)}} + (-1)^n U_{n-j}^{q^{\min(n,j)}} 
\end{equation}
for all $j \geq 1$.

Thus we have a surjective $\F$-algebra homomorphism
$$\rhoup:\Sb{n}{k}=S/(T^n_{1},\dots, T^n_{n+k-1})\ra B_{k}.$$

\textsc{Claim 1}: $\Sb{n}{k}$ is an integral domain.

In the following we will make repeated use of the fact that permutating the order of a regular sequence consisting of homogeneous elements preserves the regularity.
Note that 
\begin{eqnarray*}
T^n_{1} & \equiv & (-1)^{n} U_{n-1}^{q} \mod (C_{0},C_{1},\dots,C_{n-1})\\
T^n_{2} & \equiv &(-1)^{n} U_{n-2}^{q^{2}} \mod (C_{0},C_{1},\dots,C_{n-1})\\
&\vdots&\\
T^n_{n} & \equiv & (-1)^{n} U_{0}^{q^{n}} \mod (C_{0},C_{1},\dots,C_{n-1})\\
T^n_{n+1} & \equiv &(-1)^{n} U_{-1}^{q^{n}}  \mod (C_{0},C_{1},\dots,C_{n-1})\\
&\vdots&\\
T^n_{n+k-1} & \equiv &(-1)^{n} U_{-k+1}^{q^{n}}  \mod (C_{0},C_{1},\dots,C_{n-1}).
\end{eqnarray*}
Since $S_k$ is a polynomial algebra, $$C_{0},C_{1},\dots,C_{n-1},U_{n-1},\dots,U_{1},U_{0},U_{-1},\dots,U_{-n-k+1}$$
is a regular sequence in $S_k$, as is
$$C_{0},C_{1},\dots,C_{n-1},U_{-n-k+1},\dots,U_{-k},U_{-k+1}^{q^{n}},\dots,U_{0}^{q^{n}},U_{1}^{q^{n-1}},\dots, U_{n-1}^{q}.$$
Thus $C_{0},\dots,C_{n-1},U_{-n-k+1},\dots,U_{-k},T^n_{n+k-1},\dots,T^n_{n},T^n_{n-1},\dots,T^n_{1}$ 
is a regular sequence in $S_k$.
This means that $C_{0}$ is not a zerodivisor in $\Sb{n}{k}$ and $\Sb{n}{k}$ can be embedded into $\Sb{n}{k}[C_{0}^{-1}]$. Moreover, in 
$\Sb{n}{k}[C_{0}^{-1}]$, the relations (\ref{Tn1}), (\ref{Tn2}), \dots, (\ref{Tnn+k-1}) (in this order) imply that 
$$U_{-1},U_{-2},\dots, U_{-n-k+1}$$ can be expressed in terms of 
$C_{0},C_{1},\dots,C_{n-1},U_{0},U_{1},\dots,U_{n-1}$ and $C_{0}^{-1}$. This implies 
$$\Sb{n}{k}[C_{0}^{-1}]\cong\F[C_{0},C_{1},\dots,C_{n-1},U_{0},U_{1},\dots,U_{n-1}][C_{0}^{-1}].$$
Therefore, $\Sb{n}{k}$, as a subring of $\Sb{n}{k}[C_{0}^{-1}]$,  is an integral domain.

\textsc{Claim 2}: The map $\rhoup$ is an isomorphism and so $B_{k}$ is a complete intersection. 

By claim 1, $\Sb{n}{k}$ is integral domain, so  the height of $\ker(\rhoup)$ is equal to $\dim(\Sb{n}{k})-\dim(B_{k})=0$. Since $B_{k}$ is an integral domain, $\ker(\rhoup)$ is a prime ideal in $\Sb{n}{k}$. Note that
$\{0\}\subseteq \ker(\rhoup)$ and $\{0\}$ is a prime ideal in $\Sb{n}{k}$, so $\ker(\rhoup)=0$, i.e., $\rhoup$ is injective.  
Therefore,  $\rhoup$ is an isomorphism and $B_{k}$ is a complete intersection.

Furthermore the isomorphism $\Sb{n}{k}[C_{0}^{-1}]\cong\F[C_{0},C_{1},\dots,C_{n-1},U_{0},U_{1},\dots,U_{n-1}][C_{0}^{-1}]$ shows that $\Sb{n}{k}[C_{0}^{-1}]$ is a unique factorization domain.
In order to to show that $\Sb{n}{k}$ is a unique factorization domain, by Nagata's Lemma (see Eisenbud \cite[Lemma 19.20]{Eis1995}),  it is sufficient to show that $C_0$ is a prime element in $\Sb{n}{k}$.
 Thus it only remains to show that $\Sb{n}{k}/(C_0)$ is a domain 
  for all $n \geq 2$ and for all $k \geq 0$.

  Consider the image $\overline{T^n_j}$ in $\F[C_0,C_1,\dots,C_{n-1},U_{n-1},\dots,U_{-n-k+1}]/(C_0)$
  of $T^n_j$.  We express $\overline{T^n_j}$ in terms of the variables
  $\tilde{C}_i := C_{i+1}$ for $i=0,1,\dots,n-2$ and the variables
  $\tilde{U}_j$ defined by 
  $\tilde{U}_j := \begin{cases} U_{j+1},& \text{if } j \geq 0;\\
                                               U_{j+1}^q,& \text{if } j \leq -1 \end{cases}$
                                               for $-n-k+1 \leq j \leq n-1$.
  In terms of these variables $-\overline{T^n_j}$ is expressed as
  \begin{equation}
  \tag{$\tilde{T}_j$} \label{tildeTj} \sum_{i=0}^{n-1} (-1)^i \tilde{C}_i \tilde{U}_{-j+i}^{q^{\min(i,j)}}.
  \end{equation}                                         
    Note that the relations (\ref{tildeTj}) for $j=1,2,\dots,-n-k+1$ are precisely the relations $T^{n-1}_j$ used to define $\Sb{n-1}{k+1}$, 
    i.e, $$\Sb{n-1}{k+1} \cong 
 \frac
  {\F[\tilde{C}_0,\tilde{C}_1,\dots,\tilde{C}_{n-2},\tilde{U}_{n-2},\dots,\tilde{U}_{-n-k+1}]}
  {(\tilde{T}_1,\tilde{T}_2,\dots,\tilde{T}_{-k-n+1})}$$
  where the $\tilde{C_i}$ and the $\tilde{U_j}$ are indeterminants.
  
    Recalling that in $\Sb{n}{k}$ the $\tilde{U_j}$ are not all indeterminants but rather
    $\tilde{U_j} = U_{j+1}^q$ if $j \leq -1$  we have that                                       
$$\Sb{n}{k}/(C_0) \cong \frac{\Sb{n-1}{k+1}[Z_{-1},Z_{-2},\dots,Z_{-n-k+1}]}
          {(Z_{-1}^q - \tilde{U}_{-1}, Z_{-2}^q - \tilde{U}_{-2}, \dots, Z_{-n-k+1}^q - \tilde{U}_{-n-k+1})}.$$

  Since $\Sb{n-1}{k+1}[C_0^{-1}] \cong \F[C_0,C_1,\dots,C_{n-2},U_{n-2},\dots,U_0][C_0^{-1}]$,
  we see that $\Sb{n-1}{k+1}$ is a domain.

Let $Q_0$ denote the quotient field $Q_0 := \Qt(\Sb{n-1}{k+1})$.  The inclusion of 
$\Sb{n-1}{k+1} \hookrightarrow Q_0$ induces a natural inclusion
$$ \frac{\Sb{n-1}{k+1}[Z_{-1},Z_{-2},\dots,Z_{-n-k+1}]}
          {(Z_{-1}^q - \tilde{U}_{-1}, Z_{-2}^q - \tilde{U}_{-2}, \dots, Z_{-n-k+1}^q - \tilde{U}_{-n-k+1})}
\hookrightarrow \frac{Q_0[Z_{-1},Z_{-2},\dots,Z_{-n-k+1}]}
          {(Z_{-1}^q - \tilde{U}_{-1}, Z_{-2}^q - \tilde{U}_{-2}, \dots, Z_{-n-k+1}^q - \tilde{U}_{-n-k+1})}.$$
          We will show that this latter ring is in fact a field.
          
          Define $Q_1 := Q_0[Z_{-1}]/(Z_{-1}^q - \tilde{U}_{-1})$.  Since 
          $\tilde{U}_{-1} \notin \Frob(Q_0)$, the monic polynomial $Z_{-1}^q - \tilde{U}_{-1}$ is irreducible
          in $Q_0[Z_{-1}]$, and thus $Q_1$ is a field.
          
          Similarly we define $Q_2 := Q_1[Z_{-2}]/(Z_{-2}^q - \tilde{U}_{-2})$.  Again since 
          $\tilde{U}_{-2} \notin \Frob(Q_1)$, we see that $Q_2$ is a field.
         
           Continuing in this manner we define fields $Q_{i} := Q_{i-1}[Z_{-i}]/(Z_{-i}^q - \tilde{U}_{-i})$
           for $i=1,2,\dots,n+k-1$.
          Furthermore, for each $i \geq 1$, we have
           $$Q_i \cong \frac{Q_0[Z_{-1},Z_{-2},\dots,Z_{-i}]}
          {(Z_{-1}^q - \tilde{U}_{-1}, Z_{-2}^q - \tilde{U}_{-2}, \dots, Z_{-i}^q - \tilde{U}_{-i})}.$$
        Since the          
           ring $$\Sb{n}{k}/(C_0) \cong \frac{\Sb{n-1}{k}[Z_{-1},Z_{-2},\dots,Z_{-n-k+1}]}
          {(Z_{-1}^q - \tilde{U}_{-1}, Z_{-2}^q - \tilde{U}_{-2}, \dots, Z_{-n-k+1}^q - \tilde{U}_{-n-k+1})}$$ 
          injects into the field $Q_{n+k-1}$, it is a domain as required.

  Since $C_0$ is a prime element of $\Sb{n}{k}$ and $\Sb{n}{k}[C_0^{-1}]$ is a unique factorization domain 
  it follows that $\Sb{n}{k}$ is also a unique factorization domain.  Finally the isomorphism 
  $\rhoup: \Sb{n}{k} \to B_k$ completes the proof of Proposition~\ref{Bk}.
  \end{proof}

Next we consider $B_\infty$.

\begin{prop}\label{Binf}
$B_{\infty}$ is a unique factorization domain. 
\end{prop}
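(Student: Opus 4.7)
The plan is to wedge $B_\infty$ between two unique factorization domains and transfer the factorization structure across. Set
$$C := \F[c_{n,0}, c_{n,1}, \dots, c_{n,n-1}, u_0, u_1, \dots, u_{n-1}].$$
By Proposition~\ref{invfield} the field $\F(c_{n,0}, \dots, c_{n,n-1}, u_0, \dots, u_{n-1})$ equals the invariant field $\F(V \oplus V^*)^G$, which has transcendence degree $2n$ over $\F$; hence these $2n$ generators of $C$ are algebraically independent, $C$ is a polynomial ring and a UFD, and $C[c_{n,0}^{-1}]$ is a UFD too.

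Next I would verify the key sandwich $C \subseteq B_\infty \subseteq C[c_{n,0}^{-1}]$. The left inclusion is immediate, and the isomorphism $B_k[c_{n,0}^{-1}] \cong C[c_{n,0}^{-1}]$ established in the proof of Proposition~\ref{Bk} gives $B_k \subseteq C[c_{n,0}^{-1}]$ for every $k$, so $B_\infty \subseteq C[c_{n,0}^{-1}]$; in particular $B_\infty[c_{n,0}^{-1}] = C[c_{n,0}^{-1}]$. A direct-limit argument then shows that $c_{n,0}$ is prime in $B_\infty$: if $c_{n,0} \mid fg$ in $B_\infty$, choose $k$ with $f, g \in B_k$ and invoke the primeness of $c_{n,0}$ in $B_k$ furnished by Proposition~\ref{Bk}.

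The heart of the proof is the following lifting statement: every prime element $p$ of $C$ not associated to $c_{n,0}$ remains prime in $B_\infty$. Given $p \mid fg$ in $B_\infty$, primeness of $p$ in the UFD $C[c_{n,0}^{-1}]$ yields (say) $f = ph$ for some $h \in C[c_{n,0}^{-1}]$. Writing $h = h_0/c_{n,0}^m$ in lowest terms (so either $m = 0$ or $c_{n,0} \nmid_C h_0$) and clearing denominators gives $c_{n,0}^m f = ph_0 \in C$; in the UFD $C$ the prime $c_{n,0}$ is coprime to both $p$ and $h_0$, forcing $m = 0$. Hence $h \in C \subseteq B_\infty$ and $p \mid f$ in $B_\infty$.

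With these primes in hand, an arbitrary nonzero $f \in B_\infty$ factors as follows: pick $N \geq 0$ with $c_{n,0}^N f \in C$ and factor in $C$ as $c_{n,0}^N f = \lambda\, c_{n,0}^M \prod_i q_i^{e_i}$, where $\lambda \in \F^*$ and the $q_i$ are primes of $C$ not associated to $c_{n,0}$. The identity $c_{n,0}^{N-M} f = \lambda \prod_i q_i^{e_i}$, together with the primeness of $c_{n,0}$ in $B_\infty$ and its coprimality there to each $q_i$, forces $M \geq N$, so $f = \lambda\, c_{n,0}^{M-N} \prod_i q_i^{e_i}$ is a factorization of $f$ into primes of $B_\infty$; uniqueness is inherited from the UFD $B_\infty[c_{n,0}^{-1}] = C[c_{n,0}^{-1}]$. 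The hardest point is the lifting step: because $B_\infty$ is not Noetherian, Nagata's Lemma (as invoked for each $B_k$) is no longer directly available, and one must instead exploit the explicit polynomial structure of $C$ to clear spurious $c_{n,0}$-denominators.
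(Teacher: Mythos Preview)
Your Step~4 is the heart of the argument, and unfortunately its central claim is false: primes of $C$ not associated to $c_{n,0}$ need \emph{not} remain prime in $B_\infty$. Take $n=2$ and $p:=c_{2,1}u_0^{\,q}-u_1^{\,q}\in C$. As a polynomial of degree one in $c_{2,1}$ over $\F[c_{2,0},u_0,u_1]$ with coprime coefficients $u_0^{\,q}$ and $-u_1^{\,q}$, this $p$ is irreducible (hence prime) in $C$, and it is not an associate of $c_{2,0}$. But relation~(\ref{T1}) gives $c_{2,0}u_{-1}=c_{2,1}u_0^{\,q}-u_1^{\,q}$, so in $B_\infty$ one has $p=c_{2,0}\cdot u_{-1}$, a genuine factorization into non-units; thus $p$ is not even irreducible in $B_\infty$. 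The gap in your reasoning is the line ``in the UFD $C$ the prime $c_{n,0}$ is coprime to both $p$ and $h_0$, forcing $m=0$'': from $c_{n,0}^{\,m}f=ph_0$ you cannot conclude $c_{n,0}\mid ph_0$ in $C$, since $f$ lies only in $B_\infty$, not in $C$; and working in $B_\infty$ instead does not help, because coprimality of $c_{n,0}$ and $p$ in $C$ does not pass to $B_\infty$ (the same example shows $c_{n,0}B_\infty\cap C\supsetneq c_{n,0}C$). This also undermines Step~5, whose inequality $M\ge N$ rests on $c_{n,0}$ being coprime in $B_\infty$ to each $q_i$.

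The paper's proof avoids all of this by exploiting the grading directly: since $\deg u_{-j}=1+q^{\,j}\to\infty$, one has $(B_\infty)_d=(B_k)_d$ for all $k\gg 0$, so any element of $B_\infty$ together with all of its factorizations already lives in some fixed $B_{k_0}$; irreducibility in $B_\infty$ and in $B_{k_0}$ then coincide for elements of bounded degree, and the UFD property transfers from $B_{k_0}$. Your Step~3 is correct, and combined with this same stabilization observation (which yields atomicity of $B_\infty$) it would give a valid alternative proof via the version of Nagata's lemma for atomic domains; but the attempt to bypass atomicity through Step~4 does not work.
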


\begin{proof}
Fix a degree $d\in\N^{+}$, we note that $(B_{\infty})_{d}=(B_{k})_{d}$ for all $k\gg 0$.
For any element $a\in B_{\infty}$, we can choose $d$ such that $a\in X:=\oplus_{i=0}^{d}(B_{\infty})_{i}$. Then there exist $k_{0}\in\N$ such that $X=\oplus_{i=0}^{d}(B_{\infty})_{i}=\oplus_{i=0}^{d} (B_{k_{0}})_{i}$. 
Then any factorization of $a$ into primes 
in $B_{\infty}$ will take place in $X$ and thus also in $B_{k_{0}}$. By Proposition \ref{Bk}, any factorization of $a$ into primes 
in $B_{k_{0}}$ (so in $B_{\infty}$) is unique, i.e., $B_{\infty}$ is a unique factorization domain. 
\end{proof}

\subsection{Almost-integral elements and pseudo-almost integral elements} 

Let $D$ be an integral domain. An element $\betaup\in\Qt(D)$ is said to be \textit{almost-integral} over $D$ if there exists a nonzero element $c\in D$ such that $c\cdot \betaup^{m}\in D$ for any $m\in \N^{+}.$ 
Any integral element over $D$ is almost-integral but the converse does not necessarily holds in general. 
If $D$ is noetherian, an almost-integral element over $D$ is also an integral element. We say that $D$ is \textit{completely integrally closed} if any almost-integral element belongs to $D$.

In 2007, Anderson-Zafrullah \cite{AZ2007} introduced the notion of \textit{pseudo-almost integral elements}.
For an integral domain $D$, an element $\betaup\in\Qt(D)$ is \textit{pseudo-almost integral} over $D$ if there exists an 
infinite set of positive integers $\{s_{k} \mid k \in \N\}$ and
 a nonzero element $c\in D$ such that $c\cdot \betaup^{s_{k}}\in D$ for all  $s_{k}.$
Clearly, for any integral domain, $\{\textrm{integral elements}\}\subseteq \{\textrm{almost-integral elements}\}\subseteq \{\textrm{pseudo-almost integral elements}\}$.

Recall that $h=R_U^G(\omega)$ where $\omega \in \Omega$. 
 By Corollary \ref{coro}, we see
that $h\in \F(V\oplus V^{*})^{G}=\Qt(B_{\infty})$. Moreover, 

\begin{prop}\label{pai}
$h$ is pseudo-almost integral over $B_{\infty}$.
\end{prop}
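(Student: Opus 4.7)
The plan is to exhibit a nonzero $c \in B_{\infty}$ and an infinite set of positive integers $\{s_k\}$ with $c \cdot h^{s_k} \in B_{\infty}$. Motivated by Lemma~\ref{key lemma}, which furnishes some $r_0 \in \N$ with $c_{n,0}^{r_0}\,h \in B_{\infty}$, I would take $c := c_{n,0}^{r_0}$ and $s_k := p^k$, the Frobenius exponents. The first step is to observe that $B_{\infty}$ is stable under the Frobenius map $\Frob(z) = z^p$, because $B_{\infty}$ is a subring of $\F[V \oplus V^{*}]$ and hence closed under multiplication, so in particular under taking $p$-th powers. Applying $\Frob^k$ to the relation $c\,h \in B_{\infty}$ then yields
\[
(c\,h)^{p^k} \;=\; c^{p^k}\,h^{p^k} \;=\; c_{n,0}^{r_0 p^k}\,h^{p^k} \;\in\; B_{\infty}
\]
for every $k \geq 0$.

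Writing $c \cdot h^{p^k} = (c\,h)^{p^k}/c^{p^k-1}$, the assertion $c \cdot h^{p^k} \in B_{\infty}$ translates into the divisibility of $(c\,h)^{p^k}$ by $c^{p^k - 1} = c_{n,0}^{r_0(p^k-1)}$ in $B_{\infty}$. Since $B_{\infty}$ is a UFD (Proposition~\ref{Binf}) in which $c_{n,0}$ is prime (established in the proof of Proposition~\ref{Bk} via Nagata's Lemma, once $\Sb{n}{k}/(C_0)$ is shown to be a domain), this divisibility reduces to a single $c_{n,0}$-adic valuation inequality in $B_\infty$, namely
\[
p^k\cdot v_{c_{n,0}}(c\,h) \;\geq\; r_0(p^k - 1), \quad \text{i.e.,} \quad v_{c_{n,0}}(c\,h) \;\geq\; r_0\bigl(1 - p^{-k}\bigr).
\]
Because $v_{c_{n,0}}(c\,h)$ is a non-negative integer, the inequality holds for infinitely many $k$ precisely when $v_{c_{n,0}}(c\,h) \geq r_0$; equivalently, when $v_{c_{n,0}}(h) \geq 0$ in $B_{\infty}$.

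The main obstacle is therefore to establish this $c_{n,0}$-adic bound on $h$. I would attack it by inspecting the explicit form of $c\,h = c_{n,0}^{r_0}\,R_{U}^{G}(\omega)$ produced inside the proof of Lemma~\ref{key lemma}, which places $c\,h$ in the smaller ring $\F[c_{n,0},\ldots,c_{n,n-1},u_{1-n},\ldots,u_{n-1}] \subset B_{\infty}$. One then seeks to show that the image of $c\,h$ vanishes modulo $c_{n,0}^{r_0}$ in $B_{\infty}$, exploiting the recursive description $\Sb{n}{k}/(c_{n,0}) \cong \Sb{n-1}{k+1}[Z_{-i}]/(Z_{-i}^{q} - \tilde{U}_{-i})$ from the proof of Proposition~\ref{Bk}, which presents the reduction mod $c_{n,0}$ as a purely inseparable extension of a $\GL_{n-1}$-type ring. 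This should open the door to an induction on $n$, with the base case $n=2$ supplied by the earlier result of Chen~\cite{Che2014}.
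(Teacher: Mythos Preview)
Your approach has a genuine circularity. From Lemma~\ref{key lemma} you already know $c_{n,0}^{r_0} h \in B_\infty$, so $h \in B_\infty[c_{n,0}^{-1}]$. You then correctly reduce the condition $c_{n,0}^{r_0}\, h^{p^k} \in B_\infty$ (for infinitely many $k$) to the valuation inequality $v_{c_{n,0}}(h) \geq 0$. But in the UFD $B_\infty$, an element of $B_\infty[c_{n,0}^{-1}]$ has non-negative $c_{n,0}$-valuation if and only if it lies in $B_\infty$. Thus your ``main obstacle'' is literally the statement $h \in B_\infty$, which is exactly the conclusion that Proposition~\ref{pai} is invoked to establish in the overall proof of the theorem. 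Your proposed inductive attack on this obstacle is not fleshed out, and if it succeeded it would prove $h \in B_\infty$ directly and render the pseudo-almost integral machinery unnecessary. The full Frobenius $z \mapsto z^p$ gives you nothing here beyond Lemma~\ref{key lemma} itself: it raises the exponent on $c_{n,0}$ and on $h$ in lockstep, so no new divisibility information is produced.

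The paper's proof sidesteps this by using the \emph{partial} Frobenius $F^*$ (which sends $y_i \mapsto y_i^q$ but fixes each $x_i$) in place of the full Frobenius. Writing $\omega = \omega_x \omega_y$ with $\omega_x \in \F[f_1,\dots,f_n]$ and $\omega_y \in \F[f_1^*,\dots,f_n^*]$, one applies $(F^*)^k$ to the relation $c_{n,0}^{r_0}\omega \in \F[f_1,\dots,f_n,u_{n-1},\dots,u_{1-n}]$. Because $c_{n,0}$ and $\omega_x$ lie in $\F[V]$, they are fixed by $F^*$, so the exponent $r_0$ on $c_{n,0}$ does \emph{not} grow; only the $u$-indices shift, yielding $c_{n,0}^{r_0}\,\omega_x\,\omega_y^{q^k} \in \F[f_1,\dots,f_n,u_{n-1-k},\dots,u_{1-n-k}]$. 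Multiplying by $\omega_x^{q^k-1}$ and applying $R_U^G$ (using that $R_U^G(\omega)^{q^k} = R_U^G(\omega^{q^k})$ in characteristic $p$) gives $c_{n,0}^{r_0}\,h^{q^k} \in B_k \subseteq B_\infty$ with the \emph{same} $r_0$ for all $k$. The price paid---an unbounded shift of the $u$-indices---is precisely what the ascending union $B_\infty$ was designed to absorb.
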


\begin{proof}
We write $\omega=\omega_{x}\cdot\omega_{y}$
where $\omega_{x}=f_{1}^{a_1}f_{2}^{a_2}\cdots f_{n}^{a_n}$ and 
$\omega_{y}=f_{1}^{*b_1}f_{2}^{*b_2}\cdots f_{n}^{*b_n}$.

Let $s_{k}:=q^{k}$ with $k\in\N$, then $\{s_{k}\mid k\in\N\}$ is  an infinite set of positive integers. 
We will show that there exists $r_0 \in \N$ such that for all $k \geq 0$ we have $c_{n,0}^{r_0} h^{q^k} \in B_{\infty}$.
By the first assertion of Lemma~\ref{key lemma}, there exists $r_0 \in N$ such that
$c_{n,0}^{r_0}\, \omega =P(f_1,f_2,\dots,f_n,u_{n-1},\dots,u_{1-n}) \in \F[f_1,f_2,\dots,f_n,u_{n-1},\dots,u_{1-n}]$.
Applying $(F^*)^k$ yields 
\begin{align*}
c_{n,0}^{r_0}\,\omega_x \cdot \omega_y^{q^k} &= P(f_1,f_2,\dots,f_n,(F^*)^k(u_{n-1}),\dots,(F^*)^k(u_{1-n}))\\
  &= P(f_1,f_2,\dots,f_n,u_{n-1-k},\dots,u_{1-n-k})
\end{align*}

Thus 
\begin{align*}
c_{n,0}^{r_0}\, \omega^{q^k} &= c_{n,0}^{r_0} \omega_x^{q^k-1} \left(\omega_x \omega_y^{q^k} \right)\\
         &= \omega_x^{q^k-1} P(f_1,f_2,\dots,f_n,u_{n-1-k},\dots,u_{1-n-k})  \text{ and}\\
 c_{n,0}^{r_0}\, h^{q^k} &= c_{n,0}^{r_0} R_U^G(\omega)^{q^k} = c_{n,0}^{r_0}R_U^G(\omega^{q^k}) \\
      &=  c_{n,0}^{r_0}\, R_U^G(\omega_x^{q^k-1}\cdot P(f_1,f_2,\dots,f_n,u_{n-1-k},\dots,u_{1-n-k})) \in B_k
\end{align*}
Hence $c_{n,0}^r h^{s_k} \in B_{k} \subseteq B_{\infty}$ for all $k \geq 0$ as desired. 
\end{proof}

\subsection{Proof of Theorem \ref{mt}} 

Now we complete the proof that $A=\F[V\oplus V^*]^G$.

\begin{proof}[Proof of the first assertion of Theorem \ref{mt}]
By Section 5, it suffices to show that
$h=R_{U}^{G}(\omega)\in A$.   
We will show that $h \in B_\infty \subset A$.
 It is not hard to show directly that as unique factorization domain $B_\infty$ contains all the pseudo-almost integral
elements in its field of fractions.  Alternatively we reason as follows.
%
Recall that a domain $D$ is \textit{root closed} if for $\betaup\in \Qt(D)$ with $\betaup^{n}\in D$ for some $n\in\N^{+}$, then $\betaup\in D$.
Since $B_\infty$ is a unqiue factorization domain, it is integrally closed, so is root closed. Since $h$ is psuedo-almost integral over 
$B_\infty$, 
Anderson-Zafrullah \cite[Proposition 2]{AZ2007} implies that $h$ is almost-integral over $B_{\infty}$.
Since any unique factorization domain is completely integrally closed (see for example, Huneke-Swanson \cite[Exercise 2.26 (vii)]{HS2006}), 
$B_{\infty}$ is completely integrally closed and thus $h\in B_{\infty}\subseteq A$.
\end{proof}

Next we show that we have a minimal set of generators for $\F[V \oplus V^*]^G$.
 The algebra $\F[V \oplus V^*]$ is naturally graded by $\N \oplus \N$ where $\deg(x_i)=(1,0)$ and $\deg(y_i)=(0,1)$ for $1\leq i \leq n$.   We refer to this grading as the
  ``bigrading'' on $\F[V \oplus V^*]$.  Since the action of $G$ preserves this bigrading, the bigrading restricts to a bigrading of $\F[V \oplus V^*]^{G}$.

  Let 
  $$\Lambda := \{c_{n,0},c_{n,1},\dots,c_{n,n-1},c_{n,0}^{*},c_{n,1}^{*},\dots,c_{n,n-1}^{*},u_{1-n},u_{2-n},\dots,u_{0},\dots,u_{n-2},u_{n-1}\}$$ be the given set of generators of $\F[V \oplus V^*]^G$.
  Note that $\dim\Big(\F[V \oplus V^*]^{G}_{(d_1,d_2)}\Big)=1$ for $(d_1,d_2) = \deg(z)$ for all $z \in \Lambda$,
 i.e., for the bidgrees $\{(q^n-q^i,0), (0,q^n-q^i), (q^i,1), (1,q^i) \mid 0 \leq i \leq n-1\}$.
 Since each these graded components is 1-dimensional, it follows that the $4n-1$ elements of $\Lambda$ {\em minimally} generate
 $\F[V \oplus V^*]^{G}$.

  It remains to prove the last two assertions of Theorem \ref{mt}.
  
  \begin{proof}[Proof of the second assertion of Theorem \ref{mt}]
  If we represent the action on $V$ of an element $\sig \in G$ by an $n\times n$ matrix $E$ then the action of
 $\sig$ on $V \oplus V^*$ is given by the matrix $B := E \oplus (E^{-1})^t$.    Clearly $\det(B) =1$ and
$\rank(\text{I}_{2n} - B) = 2\cdot \rank(\text{I}_n - E)$.  In particular, $\rank(\text{I}_{2n} - B) \neq 1$, i.e., $B$ is never a pseudo-reflection.  Bonnaf\'e-Kemper \cite[Theorem 2.4]{BK2011} has proved $\F[V \oplus V^*]^{U}$ is a complete intersection, so is Cohen-Macualay.  Thus
$\F[V \oplus V^*]^{G}$ is Cohen-Macaulay by Campbell-Hughes-Pollack \cite[Theorem 1]{CHP1991}.  Since $\F[V\oplus V^*]^{G}$ is Cohen-Macaulay, $G \leq \SL(V\oplus V^*)$ and the action of $G$ on $V\oplus V^*$ is pseudo-reflection free, it follows by Braun \cite[Theorem]{Bra2011} that 
$\F[V \oplus V^*]^{G}$ is Gorenstein.
\end{proof}

  Finally we will show that $\F[V \oplus V^*]^{G}$ is not a complete intersection.

  Consider the polynomial ring
  $S$ on $4n-1$ formal variables
  $$S = \F[C_0,C_1,\dots,C_{n-1},C^*_0,C^*_1,\dots,C^*_{n-1},U_{1-n},\dots,U_0,\dots,U_{n-1}].$$
  We have a surjection of algebras $\piup:S \ra \F[V\oplus V^*]^{G}$ defined by $\piup(C_i) = c_{n,i}$, $\piup(C^*_i)=c^*_{n,i}$
  for $0\leq i \leq n-1$ and $\piup(U_i)=u_i$ for $1-n\leq i \leq n-1$.

We may pull back, via $\piup$, the bigrading on $\F[V \oplus V^*]^{G}$ to get an induced bigrading on $S$ which makes $\piup$ a morphism of $\N\oplus \N$-graded algebras.
  Let $K$ denote the kernel of $\piup$.  Then $K$ is a homogeneous ideal with respect to the bigrading.    Furthermore the involution $*$ and the homomorphisms $F$ and $F^*$ also pull back to endomorphisms of $S$ which we will also denote by $*$, $F$ and $F^*$.

For $j=1,2,\dots,n-1$ we write $T_{j}$ for  the elements of $S$ corresponding to the left hand side of the 
relation~(\ref{Tj}), i.e.,
  \begin{equation*} \label{Tsubj}
       T_j := \sum_{i=0}^n (-1)^i C_i U_{i-j}^{q^{\min\{i,j\}}} 
  \end{equation*} 
   for $j =1,2,\dots,n-1$.
  Here we use $C_{n}=C_{n}^{*}=1$ since $c_{n,n}=c^*_{n,n}=1$.
  
We also define $T_{j}^{*}:=*(T_{j})\in S$ for $j = 1,2,\dots, n-1$. 
Similarly we write $T_{00}$ for the element of $S$ corresponding to the left hand side of the relation~(\ref{T00}).
By Lemma \ref{ini} and the discussion following it, we see that

\begin{prop}
$T_{00}$, $T_1,T_2,\dots,T_{n-1}$, $T^*_1,T^*_2,\dots, T^*_{n-1}$ are elements of $K$, the kernel of $\piup$.
\end{prop}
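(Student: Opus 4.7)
The plan is essentially a direct verification: each of the listed elements of $S$ is, by construction, the pull-back via $\piup$ of a relation that has already been established in $\F[V \oplus V^{*}]^{G}$, so its image under $\piup$ vanishes. The bulk of the work was done earlier; all that remains is a bookkeeping check.

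First, for $T_j$ with $1 \leq j \leq n-1$, I would simply compute
\[
\piup(T_j) = \sum_{i=0}^{n} (-1)^i \piup(C_i)\, \piup(U_{i-j})^{q^{\min(i,j)}} = \sum_{i=0}^{n} (-1)^i c_{n,i}\, u_{i-j}^{q^{\min(i,j)}},
\]
using the convention $\piup(C_n) = c_{n,n} = 1$. This is exactly the left-hand side of the relation~(\ref{Tj}), which was established in the paragraph following Lemma~\ref{ini} by applying $F^{*}$ repeatedly to~(\ref{T0}). Hence $\piup(T_j) = 0$, i.e., $T_j \in K$.

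Next, for $T_j^{*} = *(T_j)$, the key point is that the involution $*$ on $S$ was defined precisely so as to commute with $\piup$ via the involution $*$ on $\F[V \oplus V^{*}]^{G}$; concretely, $\piup(T_j^{*}) = *(\piup(T_j))$. Since $\piup(T_j) = 0$, we obtain $\piup(T_j^{*}) = 0$ as well. Equivalently, one can recognize $\piup(T_j^{*})$ directly as the left-hand side of~(\ref{Tjs}), which was derived by applying $*$ to~(\ref{Tj}). Either way, $T_j^{*} \in K$. Finally, $T_{00}$ was defined so that $\piup(T_{00})$ is the left-hand side of~(\ref{T00}), and that relation was verified via the factorization $c_{n,0} \cdot c_{n,0}^{*} = (d_{n,n} \cdot d_{n,n}^{*})^{q-1}$ together with the determinantal identity expressing $d_{n,n} \cdot d_{n,n}^{*}$ as the determinant of the matrix with entries $u_{i-j}^{q^{\min(i,j)}}$. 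Thus $\piup(T_{00}) = 0$ and $T_{00} \in K$.

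There is no genuine obstacle here: the proposition is a formal consequence of the relations assembled in the subsection \emph{Some relations in $\F[V\oplus V^{*}]^{G}$}. The only point requiring any care is making sure the sign conventions and the role of $C_n = C_n^{*} = 1$ in the definition of $T_j$ match the conventions in (\ref{Tj}) and (\ref{Tjs}); once that is verified, the proof is a single sentence per element.
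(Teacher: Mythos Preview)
Your proposal is correct and is essentially the same as the paper's approach: the paper does not give a separate proof but simply observes that the proposition follows ``by Lemma~\ref{ini} and the discussion following it,'' which is exactly the verification you spell out. Your write-up is a more detailed version of the same one-line argument, with the bookkeeping about $C_n = C_n^* = 1$ and the compatibility of $*$ with $\piup$ made explicit.
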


  \begin{rem}{\rm
     We may use $T_0$ and $T^*_0$ to define in $S$ elements $U_n$ and $U_{-n}$ respectively.  Then $T_n$ and $T^*_n$ yield 
     elements of $K$.  
     Note that $T_n-T_n^* = (-1)^{n+1} \sum_{j=0}^{n-1} (C^*_j T_j - C_j T^*_j) \in (T_1,\dots,T_{n-1},T^*_1,\dots,T^*_{n-1})$.
         The equation  
     $\sum_{j=0}^{n} (-1)^{j}(C^*_j T_j - C_j T^*_j)=0$ is straight forward to verify.  
}\end{rem}

\begin{proof}[Proof of the third assertion of Theorem \ref{mt}]
  Let $S_+$ denote the ideal $S_+ := \displaystyle\bigoplus_{d_1 +d_2 \geq 1} S_{(d_1,d_2)}$ 
  so that $S = \F \oplus S_+$.
   We consider the $2n-1$ elements $T_{00}$, $T_1,\dots,T_{n-1}$, $T^*_1,\dots, T^*_{n-1}$ of $K$.
  Note that for $j \geq 1$ we have $T_j \equiv C_0 U_{-j} \pmod{S_+^3}$ (unless $(j,q)=(1,2)$ in which case
  $T_1 \equiv C_0 U_{-1} + (-1)^n U_{n-1}^2 \pmod{S_+^3}$).
   Similarly the element $T_{00}$ of $S$ satsifies $T_{00} \equiv C_0 C^*_0\pmod{S_+^3}$
    (unless $(n,q)=(2,2)$ in which case $T_{00} \equiv C_0 C^*_0  + U_1U_{-1}  \pmod{S_+^3}$).

        The fact that none of these $2n-1$ elements lies in the ideal $S_+^3$,
  together with the fact that these elements all have different bidegrees
  implies that some non-zero scalar multiple of each of these $2n-1$ elements lies in every homogeneous minimal 
  generating set for $K$.  
  Therefore $\F[V \oplus V^*]^{G}$
  is a complete intersection if and only if $K$ is generated by the $2n-1 = \dim S - \dim \F[V\oplus V^*]^G$ elements
  $T_{00}$, $T_1,T_2,\dots,T_{n-1}$, $T^*_1,T^*_2, \dots, T^*_{n-1}$.
  Assume, by way of contradiction, that $\F[V\oplus V^*]^G$ is a complete intersection and thus these elements do generate $K$.
  
  By Stanley \cite[Corollary 3.3]{St1978}, this implies that 
 that the Hilbert series of $\F[V \oplus V^*]^{G}$
    is given by
    \begin{eqnarray*}
    \HH\Big(\F[V \oplus V^*]^{G},\lam\Big) &=&  \frac{ (1-\lam^{2q^n-2})\cdot\prod_{k=1}^{n-1} (1-\lam^{q^n+q^k})^2 }
          { (1-\lam^2)\cdot \prod_{i=0}^{n-1} (1-\lam^{q^n-q^i})^2 \cdot\prod_{i=1}^{n-1} (1-\lam^{q^i+1})^2}\\
               &= &\frac{\mu(2q^n-2) \prod_{k=1}^{n-1} \mu(q^n+q^k)^2 }{(1-\lam)^{2n} \mu(2)\cdot \prod_{i=0}^{n-1} \mu(q^n-q^i)^2 \cdot\prod_{i=1}^{n-1} \mu(q^i+1)^2}        
      \end{eqnarray*}
 where $\mu(m) = 1+\lam+\dots +\lam^{m-1}$.
 Now by Benson \cite[Theorem 2.4.3]{Ben1993} we know that
 $$
    \HH\Big(\F[V \oplus V^*]^{G},\lam\Big) = \frac{1}{(1-\lam)^{2n}}\left[ \frac{1}{|G|} + (1-\lam)Q(\lam) \right] \text{ for some polynomial }Q(\lam).
 $$
   Comparing these two expressions for $\HH\Big(\F[V \oplus V^*]^{G},\lam\Big)$ we see that
 $$
 \frac{1}{|G|} =
  \frac {(2q^n-2) \cdot\prod_{k=1}^{n-1} (q^n+q^k)^2 } {2\cdot \prod_{i=0}^{n-1} (q^n-q^i)^2 \cdot\prod_{i=1}^{n-1} (q^i+1)^2}
  $$  
 Therefore
 $$
  |G| =  \frac {\prod_{i=0}^{n-1} (q^n-q^i)^2 \cdot\prod_{i=1}^{n-1} (q^i+1)^{2}} {(q^n-1)\cdot\prod_{k=1}^{n-1} (q^n+q^k)^2 }.
 $$ 
  The highest power of $q$ dividing the numerator of the last expression is $2\sum_{i=0}^{n-1} i = n^2-n$. 
  Also the highest power of $q$ dividing the  denominator is 
  $2\sum_{k=1}^{n-1} k = n^2-n$.
   Thus $q$ does not divide 
this fraction.   But of course $q$ does divide the order of $G=GL(V)$.
 This contradiction shows that $\F[V \oplus V^*]^{G}$ is not a complete intersection.
\end{proof}

\subsection{A conjecture} 

  We have attempted to describe minimal generators for the ideal $K$ described in the previous section.  Our experiments using the computer algebra system
  MAGMA \cite{magma} lead us to make the following conjecture.

  \begin{conj}
  Consider the algebra epimorphism $\piup:S\ra \F[V \oplus V^*]^{G}$ as above. Then $K$,
  the kernel of $\piup$, is minimally generated by the $2(n-1)$ elements $T_{1},\dots,T_{n-1},T_{1}^{\ast},\dots,T_{n-1}^{\ast}$ together with $\binomial{n+1}{2}$ other elements 
  $T_{i,j}$  where $0 \leq i,j \leq n-1$ and $i+j \leq n-1$ satisfying $$T_{i,j} \equiv C_i\cdot C^*_j \mod {S_+^3}$$
  (for $q \geq 3$).
  \end{conj}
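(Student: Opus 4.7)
The plan has three parts: construct the $T_{i,j}$, establish minimality of the conjectured list, and prove generation. The first two are essentially routine once one has the bidegree bookkeeping in hand; the third is where the work lies.

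\textbf{Construction and minimality.} For each pair $(i,j)$ with $0\le i,j$ and $i+j\le n-1$, the element $c_{n,i}c_{n,j}^*$ belongs to $\F[V\oplus V^*]^G$, and the Cauchy-Binet-type identity $c_{n,i}c_{n,j}^*\cdot\det M=\det M_{i,j}$ holds, where $M$ is the $n\times n$ matrix appearing in relation $(T_{00})$ and $M_{i,j}$ is the analogous $n\times n$ matrix with polynomial entries in the $u_k$ obtained by deleting the $(i+1)$-th column of $D$ and the $(j+1)$-th row of $D^*$. Combining this with $(T_{00})$ rewritten as $(\det M)^{q-1}=c_{n,0}c_{n,0}^*$ clears the $\det M$ denominator and yields a polynomial expression $c_{n,i}c_{n,j}^*=\piup(P_{i,j})$ for some $P_{i,j}\in S_+^3$; set $T_{i,j}:=C_iC_j^*-P_{i,j}\in K$. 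Since, for $q\ge 3$, no quadratic monomial in the generators other than $C_iC_j^*$ itself lies in bidegree $(q^n-q^i,q^n-q^j)$, we conclude $T_{i,j}\equiv C_iC_j^*\pmod{S_+^3}$. Together with the analogous statements for $T_j$ and $T_j^*$ (whose leading quadratic terms $C_0U_{-j}$ and $C_0^*U_j$ were already identified), and since the $2(n-1)+\binom{n+1}{2}$ bidegrees in question are pairwise distinct, the same bidegree argument used in the not-a-complete-intersection proof forces a nonzero scalar multiple of each of these elements to appear in every homogeneous minimal generating set of $K$.

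\textbf{Generation.} It remains to show that the ideal $I$ generated by the listed elements coincides with $K$. Since $I\subseteq K$ and $S/K\cong\F[V\oplus V^*]^G$, a bigraded Hilbert series comparison suffices. The bigraded Hilbert series of $\F[V\oplus V^*]^G$ is accessible via the free decomposition $\F[V\oplus V^*]^U=\bigoplus_{\omega\in\Omega}A\cdot\omega$ combined with the Reynolds operator $R_U^G$, or via a modular Molien-type formula. For $S/I$ one must track the syzygies among the $T_j$, $T_j^*$, and $T_{i,j}$; the syzygy $\sum_{j=0}^n(-1)^j(C_j^*T_j-C_jT_j^*)=0$ recorded in the Remark preceding the conjecture is one such, and further syzygies are forced by Gorenstein self-duality of the minimal $S$-free resolution of $S/K$.

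\textbf{Main obstacle.} The crux of the problem is this last step. Since $\F[V\oplus V^*]^G$ is Gorenstein of codimension $2n-1$ but not a complete intersection, its minimal $S$-free resolution has length $2n-1$ and is self-dual up to an $a$-invariant shift, so the bigraded Betti numbers are constrained but nontrivial to compute in closed form for general $n$ and $q$. A promising auxiliary strategy mirroring the main theorem's proof is to first verify the conjecture after localizing at $C_0^{-1}$ (where $\F[V\oplus V^*]^G[c_{n,0}^{-1}]$ is polynomial by Proposition~\ref{local} and the kernel requires only $2n-1$ generators), and then control the $C_0$-torsion of $K/I$ via the complete-intersection structure of the $B_k$ (Proposition~\ref{Bk}). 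Identifying the new generators that arise upon specialization at $C_0=0$ with the conjectured $T_{i,j}$ (for $i>0$) is what I expect to require the most care.
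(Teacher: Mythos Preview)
The paper does not prove this statement: it is explicitly labelled a \emph{Conjecture}, and the only evidence offered is computational verification via MAGMA for the finite list of pairs $(n,q)\in\{(2,q)\mid q\le 16\}\cup\{(3,q)\mid q\le 7\}\cup\{(4,2),(4,3),(5,2)\}$. There is thus no ``paper's own proof'' to compare against.

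Your proposal is likewise not a proof, and to your credit you say so: the section headed ``Main obstacle'' concedes that the generation step --- showing that the ideal $I$ generated by the listed elements exhausts $K$ --- is not carried out. The two approaches you float there (bigraded Hilbert series comparison, or localization at $C_0$ followed by controlling $C_0$-torsion) are reasonable strategies, but neither is executed, and both would require substantial new work beyond anything in the paper.

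A few remarks on the parts you do attempt. Your bidegree argument for minimality is sound for $q\ge 3$: one checks that no quadratic monomial in the generators other than $C_iC_j^*$ has bidegree $(q^n-q^i,\,q^n-q^j)$, so any $T_{i,j}\in K$ with that leading quadratic term must lie in every minimal generating set. However, your \emph{construction} of the $T_{i,j}$ is not quite right as written. The Cauchy--Binet identity gives $c_{n,i}c_{n,j}^*\cdot\det M=\pm\det M_{i,j}$ with $\det M=\pm d_{n,n}d_{n,n}^*$, but clearing the denominator via $(\det M)^{q-1}=c_{n,0}c_{n,0}^*$ produces a relation whose leading term in $S_+^2/S_+^3$ is $C_iC_j^*\cdot C_0C_0^*$, not $C_iC_j^*$ alone; so you have not yet exhibited elements of $K$ with the required leading quadratic term. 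Finding the correct $T_{i,j}$ (and, more to the point, proving that together with the $T_j,T_j^*$ they generate $K$) remains open.
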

  
  We have been able to verify this conjecture computationally using MAGMA for values of 
  $(n,q) \in \{(2,q) \mid q \leq 16\} \cup \{(3,q) \mid q \leq 7\} \cup \{ (4,2), (4,3), (5,2)\}$.


\section*{\textit{Acknowledgments}}

The first author 
thanks the Department of Mathematics and Statistics of Queen's University for providing a comfortable working environment
during his visit in 2014--2016.  We thank the anomymous referee of an earlier version of this paper for pointing out an 
error in that earlier version.  We also thank Gregor Kemper for his corrections and comments.
The first author was partially supported by NSF of China (No. 11401087) and China Scholarship Council (No. 201406625007).
Both authors were partially supported by NSERC.


\end{document}